
\documentclass[12pt]{amsart}

\textwidth=6.7in
\textheight=9in
\hoffset=-.8in
\voffset=-.4in

\usepackage{amscd,latexsym,amsthm,amsfonts,amssymb,amsmath,amsxtra,eucal}
\usepackage{graphicx}
\usepackage{genyoungtabtikz}
\usepackage{pifont}
\usepackage{multirow}
\usepackage{threeparttable}
\newcommand{\hobox}[3]{\draw (0+#1,0-#2) rectangle (1+#1,-1-#2)++(-0.5,+0.5) node {$ #3$};}

\newcommand{\domscale}{0.5}
\usepackage[all]{xy}

\usepackage{ytableau}

\usepackage[pdftex,bookmarks,colorlinks,pdfmenubar]{hyperref}

\usepackage{verbatim}
\usepackage{mathabx}
\usepackage{mathrsfs}
\usepackage{bbm}

\pagestyle{plain}
\setcounter{secnumdepth}{3}

\pagestyle{headings}
\renewcommand\theequation{\thesection.\arabic{equation}}

\newcommand{\BC}{{\mathbb {C}}}

\newcommand{\BN}{{\mathbb {N}}}

\newcommand{\BR}{{\mathbb {R}}}

\newcommand{\BZ}{{\mathbb {Z}}}

\newcommand{\CC}{{\mathcal {C}}}
\renewcommand{\CD}{{\mathcal {D}}}

\newcommand{\CK}{{\mathcal {K}}}

\newcommand{\CO}{{\mathcal {O}}}
\newcommand{\CP}{{\mathcal {P}}}

\newcommand{\CU}{{\mathcal {U}}}

\newcommand{\CY}{{\mathcal {Y}}}

\newcommand{\FS}{{\mathfrak {S}}}

\newcommand{\Fb}{{\mathfrak {b}}}

\newcommand{\Fg}{{\mathfrak {g}}}
\newcommand{\Fh}{{\mathfrak {h}}}

\newcommand{\Fl}{{\mathfrak {l}}}

\newcommand{\RU}{{\mathrm {U}}}

\newcommand{\GL}{{\mathrm{GL}}}

\newcommand{\Hom}{{\mathrm{Hom}}}

\newcommand{\Ind}{{\mathrm{Ind}}}

\newcommand{\Lie}{{\mathrm{Lie}}}

\newcommand{\Sym}{{\mathrm{Sym}}}

\newcommand{\wh}{\widehat}

\newcommand{\Dim}{{\rm Dim\,}}

\newtheorem{thm}{Theorem}[section]
\newtheorem{cor}[thm]{Corollary}
\newtheorem{lem}[thm]{Lemma}
\newtheorem{prop}[thm]{Proposition}

\newtheorem {ques/conj}[thm]{Question/Conjecture}

\newtheorem{defn}[thm]{Definition}

\usepackage[numbers]{natbib}

\newcommand{\Irr}{{\rm Irr}}


\newcommand{\cf}{\emph{cf.}~}

\newcommand{\bd}{\boldsymbol}

\begin{document}

\ytableausetup{mathmode, boxframe=normal, boxsize=2.2em}

\renewcommand{\theequation}{\arabic{equation}}
\numberwithin{equation}{section}

\title[Irreducible representations of $\GL_n(\BC)$]{Irreducible representations of $\GL_n(\BC)$ of minimal Gelfand-Kirillov dimension}

\author{Zhanqiang Bai}\author{Yangyang Chen}\author{Dongwen Liu}\author{Binyong Sun}
\address[Bai]{School of Mathematical Sciences, Soochow University, Suzhou 215006,  China}
\email{zqbai@suda.edu.cn}

\address[Chen]{School of Sciences, Jiangnan University, Wuxi, 214122,  China}
\email{chenyy@amss.ac.cn}

\address[Liu]{School of Mathematical Sciences, Zhejiang University, Hangzhou, 310058, China }
\email{maliu@zju.edu.cn}

\address[Sun]{Institute for Advanced Study in Mathematics, Zhejiang University, Hangzhou, 310058,  China}
\email{sunbinyong@zju.edu.cn}

\subjclass[2020]{22E46, 22E47} \keywords{Bernstein degree, coherent continuation, Gelfand-Kirillov dimension}

\maketitle

\begin{abstract}
    In this article, by studying the Bernstein degrees and Goldie rank polynomials, we establish a comparison between the irreducible representations of $G=\GL_n(\BC)$ possessing the minimal Gelfand-Kirillov dimension and those induced from finite-dimensional representations of the maximal parabolic subgroup of $G$ of type $(n-1,1)$. We give the transition matrix between the two bases for the corresponding coherent families.
\end{abstract}

\section{Introduction and main results} \label{sec-intro}

Given a real reductive  group $G$, the Langlands classification 
provides a description of $\mathrm{Irr}(G)$, the set of isomorphism classes of irreducible Casselman-Wallach representations of $G$.
 Every representation in $\mathrm{Irr}(G)$  is realized as the unique quotient of a standard representation of $G$, to be called the Langlands quotient. However, the Langlands quotients are somehow mysterious. For example, some fundamental invariants of them such as Gelfand-Kirillov dimensions, Bernstein degrees, and associated cycles, are often quite difficult to calculate. It is thus desirable to have some other realizations of  representations in $\mathrm{Irr}(G)$ that are more concrete than the Langlands quotients. 
In this article, we are concerned with  irreducible representations of $\GL_n(\BC)$ ($n\geq 2$)  that have minimal nonzero Gelfand-Kirillov dimension (which is $2n-2$). In particular, we will show that all such representations occur in the representations that are induced from finite-dimensional representations of the maximal parabolic subgroup of type $(n-1,1)$.


The set $\mathrm{Irr}(\BC^\times)$ is obviously identified with the set of characters of $\BC^\times$, which is further identified with the set 
\[
  \{(a,b)\in \BC\times \BC\mid a-b\in \BZ\} 
\]
so that a pair $(a,b)$ corresponds to the character
\begin{equation}\label{chiab}
 \begin{array}{rcl}
 \chi_{a,b}: \BC^\times &\rightarrow& \BC^\times, \\ z&\mapsto &z^a \bar z^b=(z\bar z)^a \cdot\bar z^{b-a}\qquad (\bar z\textrm{ denotes the complex conjugation of $z$}).
 \end{array}
\end{equation}

From now on we suppose that $G=\GL_n(\BC)$ ($n\in \BN:=\{0,1,2,\cdots\}$). A Langlands parameter for $G$ is a multiset of cardinality $n$ whose elements all belong to 
$\mathrm{Irr}(\BC^\times)$. 
Write $\CP(G)$ for the set of Langlands parameters for $G$. Then the Langlands correspondence, which in the case of complex groups is due to Zhelobenko (\cite[Chapter 7]{Z74}), gives a bijection 
\[
  \CP(G)\rightarrow \Irr(G), \qquad \gamma\mapsto \overline X(\gamma).
\]

For $a,b\in \BC$, we write
\[
  a\succ b\quad \textrm{if } a-b\in \BN^*:=\{1,2,3, \cdots\}.
\]
Suppose that $\gamma$ is a finite multiset whose elements all belong to $\mathrm{Irr}(\BC^\times)$. We say that $\gamma$ is totally  ordered if 
it is multiplicity free and has the form
\[
  \gamma=\{(a_1, b_1), (a_2, b_2), \cdots, (a_k, b_k)\}\qquad (k\geq 0)
\]
such that $a_1\succ a_2\succ\cdots \succ a_k$ and $b_1\succ b_2\succ \cdots \succ b_k$. We say that $\gamma$ is integral if for all $(a_1,b_1),(a_2,b_2)$ in $\gamma$, $a_1-a_2\in \BZ$ (or equivalently, $b_1-b_2\in \BZ$). In particular, a totally ordered multiset $\gamma$ is integral. 

In the rest of the article we assume that $n\geq 2$. Let $\gamma\in \CP(G)$. Note that if $\overline X(\gamma)$ is finite dimensional, then its Gelfand-Kirillov dimension is $0$, and otherwise it has Gelfand-Kirillov dimension $\geq 2n-2$. We will prove the following result in Section \ref{sec2}.

\begin{prop}\label{thmin}
(a) The representation $\overline{X}(\gamma)$ is finite dimensional if and only if $\gamma$ is totally ordered. 

(b) The representation  $\overline{X}(\gamma)$ has Gelfand-Kirillov dimension $2n-2$ if and only if $\gamma$ is not totally ordered and has a submultiset of cardinality $n-1$ that is  totally ordered. 
\end{prop}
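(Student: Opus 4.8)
The plan is to compute Gelfand-Kirillov dimensions of Langlands quotients $\overline X(\gamma)$ by reducing to the combinatorics of the multiset $\gamma$. The key structural fact is that for $G=\GL_n(\BC)$, writing $\gamma = \{(a_1,b_1),\dots,(a_n,b_n)\}$, one can always reorder so that the associated standard module is the normalized induction $\chi_{a_1,b_1}\times\cdots\times\chi_{a_n,b_n}$ from the Borel; and by the Zhelobenko classification together with the standard theory of the $\CZ$-representation / derived functor modules, the GK-dimension of $\overline X(\gamma)$ equals $\dim(G/P)$ for a suitable parabolic $P$ attached to $\gamma$ — more precisely, it is governed by how $\gamma$ decomposes into maximal "totally ordered strings." So the first step is to make precise the reduction: $\overline X(\gamma)$ is, up to the relevant invariants, a Speh-type / Langlands quotient attached to a segment decomposition of $\gamma$, and its GK-dimension is $n^2 - \sum_i \ell_i^2$ where $\ell_1,\dots$ are the sizes of the segments in a decomposition into totally ordered sub-multisets that is "as large as possible" in the appropriate dominance sense. (Equivalently, working over $\BC$ viewed as a real group, the relevant nilpotent orbit is the one whose partition is the transpose of $(\ell_1,\ell_2,\dots)$, and its dimension over $\BR$ is $2(n^2-\sum\ell_i^2)/2\cdot 2 = 2(n^2-\sum \ell_i^2)$ — I would pin down the exact normalization from the references rather than rederive it.)

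With that in hand, part (a) is the extreme case: $\dim = 0$ forces a single string, i.e.\ $\sum \ell_i^2 = n^2$, which by the constraint $\sum \ell_i = n$ forces one part of size $n$, meaning $\gamma$ itself is totally ordered; conversely if $\gamma$ is totally ordered then the standard module has irreducible Langlands quotient equal to a one-dimensional character composed with the determinant (the classical fact that a totally ordered parameter gives a finite-dimensional representation of $\GL_n(\BC)$), whose GK-dimension is $0$. For part (b), I want to show $\mathrm{GKdim}\,\overline X(\gamma) = 2n-2$ if and only if the optimal string decomposition has part sizes $(n-1,1)$, i.e.\ $\sum \ell_i^2 = (n-1)^2 + 1 = n^2 - 2n + 2$, giving $2n-2$; and that any decomposition other than the all-in-one-string one has $\sum\ell_i^2 \le (n-1)^2+1$, with equality exactly when the parts are $(n-1,1)$. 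The latter is an elementary extremal fact about partitions of $n$: among partitions other than $(n)$, the sum of squares is maximized uniquely by $(n-1,1)$. The remaining content is to translate "$\gamma$ has a totally ordered sub-multiset of cardinality $n-1$ but is not itself totally ordered" into "the optimal decomposition has parts $(n-1,1)$": one direction is immediate (such a sub-multiset is a string of size $n-1$, leaving one element, so $\sum\ell_i^2 \ge (n-1)^2+1$, hence $=$ by maximality and the non-finite-dimensionality); the other direction says that if the best decomposition is into a string of size $n-1$ and a singleton, then in fact $\gamma$ literally contains a totally ordered sub-multiset of size $n-1$, which is again immediate from the definition of the decomposition.

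I expect the main obstacle to be the first step — rigorously identifying $\mathrm{GKdim}\,\overline X(\gamma)$ with $n^2 - \sum \ell_i^2$ for the correct notion of "string decomposition," rather than the combinatorial endgame, which is elementary. There are two subtleties: first, a Langlands parameter $\gamma$ for $\GL_n(\BC)$ need not be multiplicity free or even "totally ordered-decomposable" in an obvious canonical way, so I must define the invariant (the partition) carefully and show it is what controls the wavefront set / associated variety; the cleanest route is probably via the Barbasch–Vogan or Joseph theory of associated varieties for complex groups, where the relevant nilpotent orbit is computed from the $\tau$-invariant or, equivalently, from the "shape" of the Zhelobenko data, and then invoke the standard fact that $\GK\dim = \tfrac12 \dim_{\BR} \CO$. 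Second, I should be careful that for $\GL_n(\BC)$ the minimal nonzero orbit (the minimal orbit in $\Fg\Fl_n$, of dimension $2n-2$ over $\BC$, hence real dimension $4n-4$, giving $\GK\dim = 2n-2$) is exactly the one attached to the partition $(2,1^{n-2})$, whose transpose is $(n-1,1)$ — consistent with the claim. Once these identifications are nailed down (citing Section \ref{sec2}'s setup and the cited literature for complex groups), parts (a) and (b) follow by the short combinatorial arguments above, and it will also be transparent why the $(n-1,1)$ parabolic is the relevant one for the rest of the paper.
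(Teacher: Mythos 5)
Your strategy for integral $\gamma$ is essentially the paper's: reduce to the Robinson--Schensted shape $\mu$ of the permutation $w$ relating the two halves of the parameter, invoke \cite[Theorem 3.20]{BV85} (the Barbasch--Vogan description of the wavefront set via cells and the Springer correspondence) to get $\Dim\overline X(\gamma)=n^2-\sum_i\mu_i^2$, and then the elementary extremal fact that $(n)$ and $(n-1,1)$ are the only partitions with $n^2-\sum\mu_i^2\leq 2n-2$, together with the observation that ``$\gamma$ contains a totally ordered submultiset of size $n-1$'' is exactly ``$\mu_1\geq n-1$.'' Two cautions: $\mu$ must be the RS shape itself (equivalently the Greene chain-partition), not an arbitrary ``optimal'' chain decomposition; and for non-regular integral parameters one must take the minimal-length double-coset representative $w$, as the paper does.

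The genuine gap is the non-integral case, which your sketch gestures at (``I must define the invariant carefully'') but never solves, and which is where most of the paper's proof actually lives. \cite[Theorem 3.20]{BV85} is stated for regular integral infinitesimal character and does not directly apply otherwise. The paper decomposes $\gamma=\gamma_1\cup\cdots\cup\gamma_k$ into maximal integral pieces of sizes $n_1\geq\cdots\geq n_k$ and proves the irreducibility statement $\overline X(\gamma)\cong \Ind^G_{P_{n_1,\ldots,n_k}}\bigl(\overline X(\gamma_1)\widehat\otimes\cdots\widehat\otimes\overline X(\gamma_k)\bigr)$ via Kazhdan--Lusztig--Vogan character-multiplicity duality (matching blocks of $G$ and of the Levi $L$ with a common block of a dual group, so that multiplicities of standard modules coincide and parabolic induction preserves the full Grothendieck-group expansion). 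Barbasch's formula \cite[Corollary 5.0.10]{B98} then gives $\Dim\overline X(\gamma)=\sum_i\Dim\overline X(\gamma_i)+\dim_\BR(\Fg/\Fp)$; if this is $2n-2$ one deduces $\dim_\BR(\Fg/\Fp)\leq 2n-2$, hence $n_1\geq n-1$, and a short case analysis ($n_1=n-1$ forces $\gamma_1$ totally ordered; $n_1=n$ is the integral case) finishes. Your proposal would need something equivalent to this induction isomorphism---or a genuinely non-integral version of the Barbasch--Vogan wavefront computation---to close the argument; as written, the ``string decomposition'' invariant has no proven route to the associated variety when $\gamma$ is not integral. Incidentally, the paper's proof of the ``if'' direction of (b) is cleaner than yours: by (a), $\gamma$ not totally ordered already gives $\Dim\geq 2n-2$, and since $\overline X(\gamma)$ is the minimal-$K$-type constituent of $I_\gamma(a_0,b_0)$, whose GK-dimension is $2n-2$ by Proposition~\ref{Ind-BD}, one gets equality with no RS computation at all.
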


By Proposition \ref{thmin}, the irreducible representation  $\overline{X}(\gamma)$ has Gelfand-Kirillov dimension $\leq 2n-2$ if and only if $\gamma$ has a submultiset of cardinality $n-1$ that is  totally ordered. Assume this is the case, and write  \[
\gamma=\gamma'\cup\{(a_0, b_0)\},
\] 
where   $\gamma'=\{(a_1',b_1'),\ldots,(a_{n-1}',b_{n-1}')\}$ is totally ordered ($\gamma$ itself may or may not be totally ordered) with
\[
a'_1\succ a'_2\succ\dots \succ  a'_{n-1}\quad \textrm{ and }\quad  b'_1\succ b'_2\succ\cdots \succ b'_{n-1}.
\]
Put
\begin{equation}\label{ind}
  I_{\gamma}(a_0,b_0):=\mathrm{Ind}_{P_{n-1,1}}^{G} \left(F_{\lambda'}\otimes  \chi_{a_0,b_0}\right)\qquad (\textrm{normalized smooth induction}),
\end{equation}
where $P_{n-1,1}$ is the standard parabolic subgroup of $G$ of type $(n-1,1)$ so that $\GL_{n-1}(\BC)\times \BC^\times$ is a Levi subgroup of it, and $F_{\lambda'}$ is the  irreducible finite-dimensional representation of $\mathrm{GL}_{n-1}(\BC)$ with infinitesimal character 
\[
\lambda':=(a_1', \dots, a_{n-1}', b_1',  \dots, b_{n-1}').
\]
By \cite[Corollary 5.0.10]{B98}, we have the following result. 

\begin{prop} \label{Ind-BD}
The representation $I_{\gamma}(a_0,b_0)$ has Gelfand-Kirillov dimension $2n-2$ and Bernstein degree $\dim F_{\lambda'}$.
\end{prop}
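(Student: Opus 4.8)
The plan is to deduce Proposition \ref{Ind-BD} directly from the cited result \cite[Corollary 5.0.10]{B98}, which computes Gelfand--Kirillov dimensions and Bernstein degrees for representations parabolically induced from finite-dimensional representations of a Levi subgroup. The representation $I_\gamma(a_0,b_0)$ is exactly of this shape: it is induced from the maximal parabolic $P_{n-1,1}$ with Levi factor $\GL_{n-1}(\BC)\times\BC^\times$, and the inducing datum $F_{\lambda'}\otimes\chi_{a_0,b_0}$ is a finite-dimensional (in fact one-dimensional in the second factor, irreducible finite-dimensional in the first) representation of that Levi. So the only real work is to translate Bar's general formula into the numbers $2n-2$ and $\dim F_{\lambda'}$ in this particular case.

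First I would recall Vogan's / Bar's description: for $Q=LN$ a parabolic of $G$ with nilradical $\Fn$ and $F$ a finite-dimensional representation of $L$, the induced representation $\mathrm{Ind}_Q^G F$ has associated variety equal to the closure of $G\cdot\Fn$ (equivalently $\overline{\Ad(G)\cdot\Fn}$), hence Gelfand--Kirillov dimension $\dim_\BR(G\cdot\Fn)$, and Bernstein degree equal to $(\dim F)$ times the degree of that orbit closure in the relevant projective space; for $\GL_n$ these multiplicity-type constants work out to $1$, so the Bernstein degree is simply $\dim F$. Next I would specialize to $Q=P_{n-1,1}$: here $N$ is abelian of complex dimension $n-1$, so as a real nilpotent coadjoint/nilpotent-orbit datum its $G$-saturation is the minimal nilpotent orbit of $\Fg\Fl_n(\BC)$ viewed as a real Lie algebra, whose real dimension is $2(n-1)=2n-2$; this matches the minimal nonzero GK-dimension appearing in Proposition \ref{thmin}. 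Finally, $\dim F_{\lambda'}\otimes\chi_{a_0,b_0}=\dim F_{\lambda'}\cdot 1=\dim F_{\lambda'}$, giving the asserted Bernstein degree.

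Concretely, I would write: by \cite[Corollary 5.0.10]{B98} applied to the parabolic $P_{n-1,1}$ and the finite-dimensional representation $F_{\lambda'}\otimes\chi_{a_0,b_0}$ of its Levi subgroup $\GL_{n-1}(\BC)\times\BC^\times$, the Gelfand--Kirillov dimension of $I_\gamma(a_0,b_0)$ equals the dimension of the associated variety, namely the closure of the Richardson orbit attached to $P_{n-1,1}$, which for the type $(n-1,1)$ parabolic of $\GL_n(\BC)$ is the minimal nilpotent orbit, of real dimension $2n-2$; and the Bernstein degree equals $\dim(F_{\lambda'}\otimes\chi_{a_0,b_0})=\dim F_{\lambda'}$ (the orbit-degree factor being $1$). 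I expect the main, essentially bookkeeping, obstacle to be stating precisely the normalization conventions so that Bar's corollary applies verbatim --- in particular confirming that ``Bernstein degree'' in \cite{B98} is normalized exactly as in the present paper (so that no extra combinatorial multiplicity enters), and that the relevant Richardson orbit for $P_{n-1,1}$ is indeed the minimal orbit with the dimension count $2n-2$ over $\BR$. Once these conventions are pinned down the proof is a one-line appeal to the cited corollary.
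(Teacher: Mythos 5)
Your argument is essentially identical to the paper's, which simply cites \cite[Corollary 5.0.10]{B98} without further elaboration; you invoke the same corollary and merely unpack it. One small imprecision: the figure $2n-2$ is the \emph{complex} dimension of the minimal nilpotent orbit (equivalently $\dim_\BR(\Fg/\Fp)$ for the real parabolic $P_{n-1,1}\subset\GL_n(\BC)$), not the real dimension of that orbit, but the resulting GK-dimension and the Bernstein-degree identity $\dim(F_{\lambda'}\otimes\chi_{a_0,b_0})=\dim F_{\lambda'}$ are correct.
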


The following result is a special case of \eqref{irred} in the proof of Proposition \ref{thmin} (b) given in Section \ref{sec2}.

\begin{thm}\label{nonint}
Suppose that $\gamma$ is not integral. Then 
\[
\overline X(\gamma) \cong I_{\gamma}(a_0,b_0).
\]
\end{thm}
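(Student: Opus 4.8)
The plan is to realise $I_\gamma(a_0,b_0)$ simultaneously as a quotient of the Langlands standard module $X(\gamma)$ and as a submodule of the ``anti-standard'' module obtained by inducing the same characters in the opposite order, and then to finish by a multiplicity-one argument. The key structural input is that $\gamma'$, being totally ordered, is integral, so the non-integrality of $\gamma$ forces $a_0-a_i'\notin\BZ$ (equivalently $b_0-b_i'\notin\BZ$) for every $i=1,\dots,n-1$.

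I would first invoke two standard facts about the Langlands classification of $\GL_m(\BC)$: a standard module has its Langlands quotient as unique irreducible quotient, occurring with multiplicity one among its composition factors, and dually the principal series induced from the same characters in the anti-dominant order has that same irreducible representation as its unique irreducible submodule, again with multiplicity one. Since $\gamma'$ is totally ordered, its natural order is dominant, so $X(\gamma')=\mathrm{Ind}_{B_{n-1}}^{\GL_{n-1}(\BC)}\bigl(\chi_{a_1',b_1'}\otimes\cdots\otimes\chi_{a_{n-1}',b_{n-1}'}\bigr)$, and by Proposition \ref{thmin}(a) its Langlands quotient is $F_{\lambda'}=\overline X(\gamma')$; dually $F_{\lambda'}$ sits inside the corresponding anti-standard module $\bar X(\gamma')$ as its socle.

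Next I would apply the exact functor $\mathrm{Ind}_{P_{n-1,1}}^{G}(-\otimes\chi_{a_0,b_0})$ to the surjection $X(\gamma')\twoheadrightarrow F_{\lambda'}$ and to the inclusion $F_{\lambda'}\hookrightarrow\bar X(\gamma')$, and then use induction in stages. This exhibits $I_\gamma(a_0,b_0)$ as a quotient of $\mathrm{Ind}_{B}^{G}\bigl(\chi_{a_1',b_1'}\otimes\cdots\otimes\chi_{a_{n-1}',b_{n-1}'}\otimes\chi_{a_0,b_0}\bigr)$ and, symmetrically, as a submodule of the induced representation with the $\gamma'$-characters in the opposite order. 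Here non-integrality is used decisively: interchanging $\chi_{a_0,b_0}$ with a neighbouring $\chi_{a_i',b_i'}$ is implemented by a rank-one intertwining operator which is an isomorphism, because the $\GL_2(\BC)$-principal series $\mathrm{Ind}(\chi_{a_i',b_i'}\otimes\chi_{a_0,b_0})$ is irreducible whenever $a_i'-a_0\notin\BZ$. Since the dominant (resp.\ anti-dominant) order of $\gamma$ restricts to the dominant (resp.\ anti-dominant) order of $\gamma'$, sliding $\chi_{a_0,b_0}$ into its proper slot through isomorphisms identifies these two ambient principal series with $X(\gamma)$ and $\bar X(\gamma)$ respectively. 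Thus $I_\gamma(a_0,b_0)$ is a quotient of $X(\gamma)$, hence admits a surjection onto $\overline X(\gamma)$, and is simultaneously a submodule of $\bar X(\gamma)$.

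To conclude: as a submodule of $\bar X(\gamma)$, the representation $I_\gamma(a_0,b_0)$ has simple socle $\overline X(\gamma)$ and contains $\overline X(\gamma)$ at most once as a composition factor, while the surjection onto $\overline X(\gamma)$ shows it occurs at least once; so it occurs exactly once. If $I_\gamma(a_0,b_0)$ were reducible, then $\overline X(\gamma)$ would be a proper submodule and the quotient $I_\gamma(a_0,b_0)/\overline X(\gamma)$ would still surject onto $\overline X(\gamma)$, forcing multiplicity at least two --- a contradiction. Hence $I_\gamma(a_0,b_0)\cong\overline X(\gamma)$. I expect the principal difficulty to lie in the middle step: checking that the intertwining operators moving $\chi_{a_0,b_0}$ into position really are isomorphisms and that the resulting principal series are exactly the standard and anti-standard modules attached to $\gamma$. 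This is where all the hypotheses (non-integrality of $\gamma$, total order of $\gamma'$, and the specific parabolic $P_{n-1,1}$) are genuinely used; the remaining steps are formal.
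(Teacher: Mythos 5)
Your proposal is correct in essence, but it takes a genuinely different route from the paper's. The paper deduces Theorem \ref{nonint} as a special case of the more general irreducibility statement \eqref{irred}, which it proves by reducing to regular infinitesimal character via coherent continuation and then invoking Vogan's character--multiplicity (Kazhdan--Lusztig--Vogan) duality: the blocks of $\overline X(\gamma)$ and $\overline X_L(\gamma)$ are dual to the same block of the dual group $\check G$, so the multiplicity matrices coincide, and induction in stages gives the equality $[\Ind^G_P(\overline X_L(\gamma))]=[\overline X(\gamma)]$ in the Grothendieck group, hence the isomorphism. Your argument instead works directly with the module structure: by exactness of induction, $I_\gamma(a_0,b_0)$ is a quotient of $\Ind_B^G(\chi_{a_1',b_1'}\otimes\cdots\otimes\chi_{a_{n-1}',b_{n-1}'}\otimes\chi_{a_0,b_0})$, and since $a_0-a_i'\notin\BZ$ for every $i$, the rank-one intertwining operators moving $\chi_{a_0,b_0}$ past the $\chi_{a_i',b_i'}$ are isomorphisms (the $\GL_2(\BC)$ principal series with non-integral parameter is irreducible), so $I_\gamma(a_0,b_0)$ is a quotient of the dominant-order principal series $X(\gamma)$ and a submodule of the anti-dominant one, and the socle/cosocle multiplicity-one count forces irreducibility. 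The two approaches trade generality for transparency: yours is more elementary and self-contained (no duality, no coherent families), and in fact this is the classical intertwining-operator argument for irreducibility of non-unitarily-induced modules in the non-integral regime; the paper's proof is heavier but proves the stronger statement \eqref{irred} uniformly for any decomposition $\gamma=\gamma_1\cup\cdots\cup\gamma_k$ into maximal integral pieces, which is needed elsewhere in Section 2. Two small cautions on your write-up: be careful about the sub-vs-quotient convention for the finite-dimensional representation inside the dominant-order principal series of $\GL_{n-1}(\BC)$ (this depends on the choice of Borel; if it comes out as a submodule rather than a quotient you should just swap the roles of the standard and anti-standard modules throughout), and note that $\overline X$ in the paper already denotes the Langlands subquotient, so your notation $\bar X(\gamma)$ for the anti-standard module clashes and should be renamed.
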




From now on, we further assume that $\gamma$ is integral. Write
\[
\Lambda:=\Lambda_1^n\times \Lambda_1^n\subset \BC^n\times \BC^n,\quad\textrm{where  $\Lambda_1:=a_0+\BZ=b_0+\BZ$.}
\]
 Write $\mathcal K(G)$ for the Grothendieck group (with $\BC$-coefficients) of the category of Casselman-Wallach representations of $G$. For every Casselman-Wallach representation $V$ of $G$, write $[V]\in \mathcal K(G)$ for the Grothendieck group element represented by $V$. Write $\mathcal R(G)$ for the subspace of $\mathcal K(G)$ spanned by all elements of the form $[F]$, where $F$ is an irreducible finite-dimensional representation of $G$ that is integral in the sense that it extends to a holomorphic representation of $G\times G$ with respect to the complexification map 
\begin{equation}\label{coml}
G\rightarrow G\times G, \ g\mapsto (g,\bar g).
\end{equation}
Here $\bar g$ denotes the entry-wise complex conjugation of $g$.
By using the tensor products of representations, $\mathcal R(G)$ is naturally a commutative $\BC$-algebra, and $\mathcal K(G)$ is naturally a $\mathcal R(G)$-module. 

A $\mathcal K(G)$-valued coherent family on $\Lambda$ is a map
\[
 \Psi: \Lambda\rightarrow \mathcal K(G) 
\]
such that
\begin{itemize}
    \item for all $\lambda\in \Lambda$, $\Psi(\lambda)$ is a linear combination of irreducible representations of $G$ with infinitesimal character $\lambda$;
    \item for every irreducible finite-dimensional representation  $F$ of $G$ that is integral, we have
\[
  [F]\cdot \Psi(\lambda)=\sum_\mu \Psi(\lambda+\mu)\qquad\textrm{for all $\lambda\in \Lambda$},
\]
where $\mu$ in the summation runs over all weights of $F$, counted with multiplicities.
\end{itemize}

Write $\mathrm{Coh}_\Lambda(\mathcal K(G))$ for the space of all $\mathcal K(G)$-valued coherent families on $\Lambda$. Recall that an element 
$(a_1, a_2, \dots, a_n, b_1,b_2, \dots, b_n)\in \Lambda$ is said to be regular (for $G$) if  $a_1$, $a_2$, $\dots$, $a_n$ are pairwise distinct, and $b_1$, $b_2$, $\dots$, $b_n$ are also pairwise distinct. It is said to be dominant (for $G$)  if $a_i-a_j, b_i-b_j \notin -\BN^*$ for all $1\leq i<j\leq n$.
Let $\mathfrak S_n$ denote the permutation group of the set $\{1,2, \dots, n\}$. 
For every $\lambda:=(a_1, a_2, \dots, a_n, b_1,b_2, \dots, b_n)\in \Lambda$ and every $w\in \mathfrak S_n$, write
\begin{equation} \label{gammaw}
  \gamma_{\lambda, w}:=\{(a_1,b_{w^{-1}(1)}),(a_2,b_{w^{-1}(2)}),\dots, (a_n,b_{w^{-1}(n)})\},
\end{equation}
which is a Langlands parameter for $G$.
By \cite[Chapter 7]{V81}, we have the following result.


\begin{prop}  \label{base}
For every $w\in \mathfrak S_n$, there is a  unique element $\overline \Psi_w\in \mathrm{Coh}_\Lambda(\mathcal K(G))$ such that for all regular dominant element $\lambda\in \Lambda$, $\overline \Psi_w(\lambda)=[\overline X(\gamma_{\lambda,w})]$, where $\gamma_{\lambda,w}$ is defined as in \eqref{gammaw}.
 Moreover, $\{\overline \Psi_w\}_{w\in \mathfrak S_n}$ is a basis of  $\mathrm{Coh}_\Lambda(\mathcal K(G))$. 
\end{prop}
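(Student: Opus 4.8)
The plan is to derive the proposition from Vogan's general theory of coherent continuation \cite[Chapter 7]{V81}, Zhelobenko's description of $\Irr(G)$ \cite[Chapter 7]{Z74}, and the elementary observation that full principal series furnish explicit coherent families. Throughout, for a Langlands parameter $\gamma$ write $X(\gamma)$ for the standard representation whose Langlands quotient is $\overline X(\gamma)$.

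First I would invoke the foundational statement of \cite[Chapter 7]{V81}: for any regular $\lambda_0\in\Lambda$, evaluation at $\lambda_0$ is a linear isomorphism
\[
 \mathrm{Coh}_\Lambda(\mathcal K(G))\ \xrightarrow{\ \sim\ }\ \mathcal K(G)_{\lambda_0},
\]
where $\mathcal K(G)_{\lambda_0}\subseteq\mathcal K(G)$ is the span of the classes of irreducible representations of $G$ with infinitesimal character $\lambda_0$; in particular a coherent family is determined by its value at a single regular point, and every virtual character of regular infinitesimal character is the value at $\lambda_0$ of a unique coherent family on $\Lambda$. Next, note that $\Lambda$ contains regular dominant elements, e.g. $(a_0+n-1,\dots,a_0,\,a_0+n-1,\dots,a_0)$, since $\Lambda_1=a_0+\BZ$; fix such a $\lambda_0$. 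Because $\lambda_0$ is regular, the Langlands parameters in $\CP(G)$ with infinitesimal character $\lambda_0$ are exactly the $n!$ pairwise distinct multisets $\gamma_{\lambda_0,w}$, $w\in\mathfrak S_n$; hence, by the Langlands correspondence \cite[Chapter 7]{Z74}, $\{[\overline X(\gamma_{\lambda_0,w})]\}_{w\in\mathfrak S_n}$ is a basis of $\mathcal K(G)_{\lambda_0}$. Letting $\overline\Psi_w$ be the unique coherent family with $\overline\Psi_w(\lambda_0)=[\overline X(\gamma_{\lambda_0,w})]$, the displayed isomorphism immediately implies that $\{\overline\Psi_w\}_{w\in\mathfrak S_n}$ is a basis of $\mathrm{Coh}_\Lambda(\mathcal K(G))$, and that a coherent family with the asserted property is unique as soon as that property is checked at even one regular dominant point.

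It remains to verify $\overline\Psi_w(\lambda)=[\overline X(\gamma_{\lambda,w})]$ at \emph{every} regular dominant $\lambda\in\Lambda$, which I would do through the standard modules. For $v\in\mathfrak S_n$ and $\lambda=(a_1,\dots,a_n,b_1,\dots,b_n)\in\Lambda$, set $\Psi^{\mathrm{st}}_v(\lambda):=[\Ind_B^G(\chi_{a_1,b_{v^{-1}(1)}}\otimes\cdots\otimes\chi_{a_n,b_{v^{-1}(n)}})]$, with $B\subseteq G$ the Borel subgroup of upper triangular matrices; this class does not depend on the order of the inducing characters and equals $[X(\gamma_{\lambda,v})]$. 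The tensor identity for parabolic induction, combined with the $\mathfrak S_n$-invariance of weight multiplicities, shows that each $\Psi^{\mathrm{st}}_v$ is a coherent family on $\Lambda$. For $\lambda$ regular dominant, the classes $[X(\gamma_{\lambda,v})]$, $v\in\mathfrak S_n$, form a basis of $\mathcal K(G)_\lambda$ and are related to the basis $\{[\overline X(\gamma_{\lambda,v})]\}_v$ by a unitriangular transition matrix; by the Jantzen--Zuckerman translation principle this matrix is the same for all regular dominant $\lambda\in\Lambda$, as they lie in one Weyl chamber and, the infinitesimal character being integral, share the same integral root system. Thus there are scalars $c_{wv}$, independent of $\lambda$, with $[\overline X(\gamma_{\lambda,w})]=\sum_v c_{wv}\,\Psi^{\mathrm{st}}_v(\lambda)$ for all regular dominant $\lambda$. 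The right-hand side is the value at $\lambda$ of the coherent family $\sum_v c_{wv}\,\Psi^{\mathrm{st}}_v$; since a coherent family is determined by its values on the infinite set of regular dominant elements of $\Lambda$, this family coincides with $\overline\Psi_w$, which finishes the argument.

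I expect the main obstacle to be precisely this last step: showing that the partially specified rule $\lambda\mapsto[\overline X(\gamma_{\lambda,w})]$ is the restriction of a genuine coherent family, compatibly at all regular dominant points at once. It reduces to three structural inputs — Vogan's existence and uniqueness theorem for coherent families, the coherence of the principal-series families $\Psi^{\mathrm{st}}_v$, and the translation-principle invariance of the standard-to-irreducible transition matrix — and requires no new computation; the only genuine care is in bookkeeping the $\mathfrak S_n$-action on the $b$-coordinates so that $\Psi^{\mathrm{st}}_v$ is indexed consistently with $\gamma_{\lambda,v}$.
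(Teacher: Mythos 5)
Your proposal is correct and takes essentially the same route as the paper, which simply cites \cite[Chapter 7]{V81}; you have reconstructed the standard Vogan argument (evaluation at a regular point is an isomorphism onto virtual characters of that infinitesimal character, principal-series families give an explicit spanning set of coherent families, and the translation-invariance of the standard-to-irreducible transition matrix upgrades the definition at a single $\lambda_0$ to the stated property at all regular dominant $\lambda$). The only small point worth flagging is that the $\FS_n$-invariance of weight multiplicities you invoke to prove coherence of $\Psi^{\mathrm{st}}_v$ is genuinely needed because of the fixed permutation $v$ in the definition; you noted this, and the bookkeeping is as you describe.
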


For $i, j\in \{1, 2, \ldots, n\}$, define the following cycle in $\mathfrak S_n$
\begin{equation}\label{cyc}
w_{i,j} := \begin{cases} (i (i-1)\cdots j), & \textrm{if }i>j,   \\  (i(i+1)\cdots j), & \textrm{if }i<j, \\
1, & \textrm{if }i=j.
\end{cases}
\end{equation}
Note that $w_{i,i+1}= w_{i+1, i}$ for all $1\leq i<n$, and  $w_{i,j}^{-1}=w_{j,i}$ for all $1\leq i,j\leq n$. For every  $w\in  \mathfrak S_n$, let $\overline \Psi_w\in \mathrm{Coh}_\Lambda(\mathcal K(G))$ be as in Proposition \ref{base}. 
We have the following consequence of Proposition \ref{thmin}.
 
 \begin{cor}\label{cohm}
Let $w\in  \frak S_n$ and let $\lambda\in \Lambda$ be a regular dominant element. Then the irreducible representation    $\overline{\Psi}_w(\lambda)$ has Gelfand-Kirillov dimension $2n-2$  if and only if $w=w_{i,j}$ for some distinct $i, j\in \{1, 2, \ldots, n\}$.
 \end{cor}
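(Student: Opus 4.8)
The plan is to translate the statement into combinatorics of the permutation $w$ via Proposition~\ref{base}, and then to apply Proposition~\ref{thmin}. Write $\lambda=(a_1,\dots,a_n,b_1,\dots,b_n)$. Since $\lambda\in\Lambda$ is regular and dominant, it is integral and satisfies $a_1\succ a_2\succ\cdots\succ a_n$ and $b_1\succ b_2\succ\cdots\succ b_n$; in particular $\succ$ restricts to a total order on $\{a_1,\dots,a_n\}$ and on $\{b_1,\dots,b_n\}$, and for these elements $b_i\succ b_j$ if and only if $i<j$. By Proposition~\ref{base}, $\overline\Psi_w(\lambda)=[\overline X(\gamma_{\lambda,w})]$ with $\gamma_{\lambda,w}=\{(a_i,b_{w^{-1}(i)}):1\le i\le n\}$, which is multiplicity free because the $a_i$ are pairwise distinct. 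Listing the elements of $\gamma_{\lambda,w}$ so that the first coordinates decrease for $\succ$ forces the order $i=1,2,\dots,n$; hence $\gamma_{\lambda,w}$ is totally ordered if and only if $b_{w^{-1}(1)}\succ b_{w^{-1}(2)}\succ\cdots\succ b_{w^{-1}(n)}$, equivalently $w^{-1}(1)<w^{-1}(2)<\cdots<w^{-1}(n)$, equivalently $w=1$.

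Next I would determine when $\gamma_{\lambda,w}$ has a totally ordered submultiset of cardinality $n-1$. Such a submultiset is obtained by deleting exactly one element $(a_k,b_{w^{-1}(k)})$, and by the same reasoning as above it is totally ordered precisely when the sequence obtained from $(w^{-1}(1),\dots,w^{-1}(n))$ by erasing its $k$-th entry is strictly increasing. So the heart of the matter is the elementary combinatorial lemma: for $\sigma\in\mathfrak S_n$, there is an index $k$ for which $(\sigma(1),\dots,\widehat{\sigma(k)},\dots,\sigma(n))$ is strictly increasing if and only if $\sigma=w_{i,j}$ for some $i,j\in\{1,\dots,n\}$. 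The ``if'' direction is a direct inspection of the one-line notation of $w_{i,j}$, deleting the position at which the value wraps around. For ``only if'', set $m:=\sigma(k)$; the erased sequence must be the unique increasing listing of $\{1,\dots,n\}\setminus\{m\}$, and this forces $\sigma$ to fix every index outside the integer interval with endpoints $k$ and $m$ and to act on that interval by a single cyclic shift, i.e.\ $\sigma$ is one of the $w_{i,j}$. Since $w_{i,j}^{-1}=w_{j,i}$, the family $\{w_{i,j}\}$ is closed under inversion, so applying the lemma to $\sigma=w^{-1}$ shows that $\gamma_{\lambda,w}$ has a totally ordered submultiset of cardinality $n-1$ if and only if $w=w_{i,j}$ for some $i,j$.

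Finally I would combine the two observations with Proposition~\ref{thmin}(b): the irreducible representation $\overline X(\gamma_{\lambda,w})$ has Gelfand-Kirillov dimension $2n-2$ if and only if $\gamma_{\lambda,w}$ is not totally ordered and admits a totally ordered submultiset of cardinality $n-1$, i.e.\ if and only if $w=w_{i,j}$ for some $i,j$ with $w\neq 1$. Since $w_{i,j}=1$ exactly when $i=j$, this is precisely the condition that $w=w_{i,j}$ for some distinct $i,j$, which proves the corollary. The only step that is not pure bookkeeping is the ``only if'' half of the combinatorial lemma, which is elementary but requires a careful description of the permutations that become sorted after a single deletion.
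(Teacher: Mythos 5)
Your argument is correct and is exactly the route the paper intends: the corollary is stated as an immediate consequence of Proposition~\ref{thmin}, and you supply the elementary combinatorial bridge (that a permutation $\sigma$ has some single deletion from its one-line notation yielding an increasing sequence if and only if $\sigma\in\{w_{i,j}\}$, together with closure of $\{w_{i,j}\}$ under inversion) that the authors left implicit. Nothing is missing; the only nontrivial step, the ``only if'' half of the lemma, is handled correctly.
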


 For $ i, j\in\{2,\ldots, n\}$, put
\begin{equation} \label{v_ij}
v_{ i, j }:= \begin{cases}  w_{i-1, j}, & \textrm{if }i \leq j, \\
w_{i, j-1}, & \textrm{if } i>j.
\end{cases}
\end{equation}
It is easy to see that
\[
\{v_{i,j} \mid  i, j \in \{2, \ldots,  n\}\} = \{w_{i,j} \mid  i, j \in \{1,2,\ldots,n \}, i\neq j\}.
\]

Let ${\rm Coh}_\Lambda^{\min}(\CK(G))$ be the subspace of ${\rm Coh}_\Lambda(\CK(G))$ consisting of the coherent families $\Psi$ such that for all $\lambda\in \Lambda$, $\Psi(\lambda)$ is a linear combination of irreducible representations of Gelfand-Kirillov dimension $\leq 2n-2$. By Corollary \ref{cohm}, \cite[Corollary 7.3.23]{V81} and the Jantzen-Zuckerman translation principle (\cf \cite[Lemma 7.2.15]{V81}),  the space ${\rm Coh}_\Lambda^{\min}(\CK(G))$ has dimension $(n-1)^2+1$, and 
\begin{equation} \label{basis1}
\{\overline{\Psi}_{v_{i,j}} \mid i, j\in \{2,\ldots, n\}\} \cup \{\overline{\Psi}_1\}
\end{equation}
form a basis 
of it.

 Recall that by \cite[Corollary 7.3.23]{V81}, the coherent family $\overline \Psi_w$ ($w\in \frak S_n$) is uniquely determined  by its value at a  dominant element of $\Lambda$, provided that this value is nonzero (hence irreducible).
 We will prove the following result in Section \ref{sec5}.

\begin{thm} \label{thm:sing}
Let $i, j$ be distinct elements in $\{1, 2,\ldots, n\}$. Define $i_0, j_0\in \{1, 2,\ldots, n-1\}$ by
\[
(i_0, j_0) := \begin{cases} (i, j-1), & \textrm{if }i<j, \\
(i-1, j), & \textrm{if }i>j.
\end{cases}
\]
Assume that $\frac{n}{2}\in \Lambda_1$. Put
\[
\lambda^0_{i,j} := \left(\frac{n}{2}-1, \frac{n}{2}-2, \ldots, \frac{n}{2}-i_0, \frac{n}{2}-i_0, \ldots, 1-\frac{n}{2}, \frac{n}{2}-1, \ldots, \frac{n}{2}-j_0, \frac{n}{2}-j_0, \ldots, 1-\frac{n}{2}\right)\in \Lambda
\]
and
\[
\gamma^0_{i,j} := \left\{ \left(\frac{n}{2}-1, \frac{n}{2}-1\right),\left(\frac{n}{2}-2, \frac{n}{2}-2\right)\ldots, \left(1-\frac{n}{2}, 1-\frac{n}{2}\right), \left(\frac{n}{2}-i_0,  \frac{n}{2}-j_0\right)\right\}\in \CP(G).
\]
Then
\[
\overline \Psi_{w_{i,j}}(\lambda^0_{i,j}) =
\left[\overline X(\gamma^0_{i,j})\right]= \left[\Ind^G_{P_{n-1,1}}\left(1\otimes \chi_{\frac{n}{2}-i_0, \frac{n}{2}-j_0}\right)\right],
\]
where $1$ denotes the trivial representation of $\GL_{n-1}(\BC)$.
\end{thm}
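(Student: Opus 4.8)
The plan is to verify the chain of equalities from right to left, treating the three expressions as the value at $\lambda^0_{i,j}$ of a coherent family. First I would compute the infinitesimal character of each side and check it equals $\lambda^0_{i,j}$ (up to the $\mathfrak{S}_n\times\mathfrak{S}_n$ action). The key structural observation is that $\lambda^0_{i,j}$ is a \emph{singular} element of $\Lambda$: among the $a$-coordinates the value $\frac{n}{2}-i_0$ appears twice, and among the $b$-coordinates the value $\frac{n}{2}-j_0$ appears twice, while all other coordinates of each block are distinct. So we are evaluating the coherent family $\overline{\Psi}_{w_{i,j}}$ at a wall. The middle equality $\overline{\Psi}_{w_{i,j}}(\lambda^0_{i,j})=[\overline X(\gamma^0_{i,j})]$ is then essentially a Jantzen--Zuckerman translation-to-the-wall computation: by \cite[Corollary 7.3.23]{V81} $\overline{\Psi}_{w_{i,j}}$ is determined by its value at a regular dominant point, where it equals $[\overline X(\gamma_{\lambda,w_{i,j}})]$ for $\gamma_{\lambda,w_{i,j}}$ as in \eqref{gammaw}; I would then slide $\lambda$ to $\lambda^0_{i,j}$ and identify the limit. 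The combinatorial content is that translating the totally ordered pieces to the wall collapses the $a$-block and the $b$-block to the strings $(\frac n2-1,\dots,1-\frac n2)$ while the single pair $(a_0,b_0)$ lands on $(\frac n2-i_0,\frac n2-j_0)$; one must check that the resulting parameter $\gamma^0_{i,j}$ still has a totally ordered submultiset of cardinality $n-1$ (it does, namely the diagonal pairs), so by Proposition~\ref{thmin}(b) the translate remains irreducible of GK-dimension $2n-2$ rather than degenerating further, and hence equals $[\overline X(\gamma^0_{i,j})]$ on the nose with no lower-order correction terms.

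For the rightmost equality $[\overline X(\gamma^0_{i,j})]=[\Ind^G_{P_{n-1,1}}(1\otimes\chi_{\frac n2-i_0,\frac n2-j_0})]$, I would argue that the induced module $I_{\gamma^0_{i,j}}(\frac n2-i_0,\frac n2-j_0)$ in the notation of \eqref{ind} has $F_{\lambda'}$ equal to the finite-dimensional representation of $\GL_{n-1}(\BC)$ with infinitesimal character $(\frac n2-1,\dots,1-\frac n2)$ in both blocks, which is the trivial representation (its highest weight is $0$). Then I want to show this induced representation is irreducible and isomorphic to the Langlands quotient $\overline X(\gamma^0_{i,j})$. Here I would invoke Theorem~\ref{nonint} in spirit but, since $\gamma^0_{i,j}$ is integral, instead appeal directly to the irreducibility statement \eqref{irred} referenced in the proof of Proposition~\ref{thmin}(b): the point is that $\gamma'$ (the diagonal part) is totally ordered and the extra pair $(\frac n2-i_0,\frac n2-j_0)$ is positioned so that the induced module has a unique irreducible quotient which is also a submodule, forcing irreducibility. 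Matching Langlands data then gives the isomorphism with $\overline X(\gamma^0_{i,j})$, and passing to Grothendieck group classes gives the stated equality.

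I expect the main obstacle to be the middle equality: controlling the translation functor to the singular point $\lambda^0_{i,j}$ and ruling out that $\overline{\Psi}_{w_{i,j}}(\lambda^0_{i,j})$ either vanishes or picks up extra constituents of strictly smaller GK-dimension. Concretely, one must pin down \emph{which} chamber wall $\lambda^0_{i,j}$ lies on relative to the dominant chamber used to define $\overline{\Psi}_{w_{i,j}}$, and verify that the cycle $w_{i,j}$ is exactly the Weyl-group element whose corresponding standard module survives translation to that wall as an irreducible object. The bookkeeping with the two independent $\mathfrak{S}_n$-actions (on the $a$'s and the $b$'s) and the precise shape of $w_{i,j}$ from \eqref{cyc} is where the care is needed; I would organize it by first treating the case $i<j$ (so $(i_0,j_0)=(i,j-1)$) and then noting the case $i>j$ follows by the symmetry $w_{i,j}^{-1}=w_{j,i}$ together with the contragredient/duality on $\Irr(G)$ that swaps the roles of the two blocks. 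Once the chamber combinatorics is set up correctly, Proposition~\ref{thmin}, Corollary~\ref{cohm}, and \cite[Corollary 7.3.23]{V81} do the rest.
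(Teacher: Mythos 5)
Your overall strategy agrees with the paper's: identify $\Ind^G_{P_{n-1,1}}(1\otimes\chi_{\frac n2-i_0,\frac n2-j_0})$ as an irreducible representation with the right Langlands data, then pin down $\overline\Psi_{w_{i,j}}(\lambda^0_{i,j})$ by translation from a regular point. But both of the technical steps that actually carry the argument are left unresolved, and one of them is outright wrong.

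For irreducibility of the degenerate principal series, you propose to ``appeal directly to the irreducibility statement \eqref{irred}.'' This cannot work: \eqref{irred} is established for the decomposition $\gamma=\gamma_1\cup\cdots\cup\gamma_k$ into integral pieces that are pairwise \emph{non-integral}, and it is the non-integrality of $\gamma_i\cup\gamma_j$ ($i\neq j$) that makes the Kazhdan--Lusztig--Vogan duality identify the multiplicity matrices of $G$ and $L$. Here $\gamma^0_{i,j}$ is integral, so the only decomposition to which \eqref{irred} applies is the trivial one $k=1$, $P=G$, which gives nothing. Your fallback heuristic (``unique irreducible quotient which is also a submodule, forcing irreducibility'') is not an argument; for integral infinitesimal character these degenerate principal series can and do reduce, and the entire content of the statement is that the particular character $\chi_{\frac n2-i_0,\frac n2-j_0}$ sits outside the reducibility locus. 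The paper gets this from Xue's classification of the reducibility points \cite[Theorem 1.1(a)]{X18}: the character lies in neither $\mathrm{C}^+(n)$ nor $\mathrm{C}^-(n)$, hence the induced representation is irreducible. One then matches it with $\overline X(\gamma^0_{i,j})$ by induction in stages plus a minimal $K$-type comparison. Some input of this kind (Xue, Howe--Lee, or an explicit intertwining-operator computation) is indispensable here.

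For the middle equality $\overline\Psi_{w_{i,j}}(\lambda^0_{i,j})=[\overline X(\gamma^0_{i,j})]$, you correctly flag the non-vanishing as the crux (``ruling out that $\overline\Psi_{w_{i,j}}(\lambda^0_{i,j})$ either vanishes or picks up extra constituents'') but you never resolve it; you only say the bookkeeping ``is where the care is needed.'' The paper supplies the missing input: by \cite[Theorem 6.18]{SV80}, a translation of $\overline X(\gamma_{\lambda,w_{i,j}})$ to the wall through $\alpha_{i_0}$ (on the left) and $\alpha_{j_0}$ (on the right) is nonzero precisely when $\alpha_{i_0}$, $\alpha_{j_0}$ are not in the corresponding Borho--Jantzen--Duflo $\tau$-invariants, and this is checked concretely by verifying $w_0w_{i,j}^{-1}(\alpha_{i_0})>0$ and $w_{i,j}w_0(\alpha_{j_0})>0$ via \cite[(3.9)]{BV85}. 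Without a statement of this type your proof sketch doesn't close; notice also that your appeal to Proposition~\ref{thmin}(b) to rule out ``degenerating further'' is circular at this point, since that proposition controls the GK-dimension of $\overline X(\gamma^0_{i,j})$ but does not by itself prevent the translated class $\overline\Psi_{w_{i,j}}(\lambda^0_{i,j})$ from vanishing.
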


Theorem \ref{thm:sing} specifies the value of the coherent family $\overline\Psi_{w_{i,j}}$ at the dominant element $\lambda^0_{i,j}\in \Lambda$, hence determines the whole family $\overline \Psi_{w_{i,j}}$, as mentioned above. 


Now we will introduce another basis of the space ${\rm Coh}_\Lambda^{\min}(\CK(G))$. 
Write $L:=\mathrm{GL}_{n-1}(\BC)\times \BC^\times$, to be viewed as a Levi factor of the standard parabolic subgroup $P_{n-1,1}$. Similarly, we have the Grothendieck group $\mathcal K(L)$ and the space  $\mathrm{Coh}_\Lambda(\mathcal K(L))$  of $\mathcal K(L)$-valued coherent families. The normalized parabolic induction induces a linear map
\[
 \mathrm{Ind}: \mathcal K(L)\rightarrow \mathcal K(G),
\]
which further induces a linear map
\begin{equation}\label{Ind}
 \mathrm{Ind}: \mathrm{Coh}_\Lambda(\mathcal K(L))\rightarrow \mathrm{Coh}_\Lambda(\mathcal K(G)).
\end{equation}
Let $\Theta\in \mathrm{Coh}_\Lambda(\CK(L))$ be the unique coherent family such that if $\lambda\in \Lambda$ is regular dominant (for $L$), then $\Theta(\lambda)$ is the class of the irreducible finite-dimensional representation of $L$ with infinitesimal character $\lambda$. 

For $i,j\in\{1,2, \ldots, n\}$ and $\lambda=(a_1, \ldots, a_n, b_1, \ldots, b_n)\in \Lambda$, write
\[
\lambda_{i,j} := (a_1, a_2, \dots, a_{i-1}, a_{i+1},\dots, a_n, a_i, b_1,b_2, \dots, b_{j-1}, b_{j+1},\dots,b_n, b_j).
\]
Define a map $\Psi_{i,j}: \Lambda \to \CK(G)$ by
\begin{equation} \label{Psi-def}
\Psi_{i,j}(\lambda) := \Ind(\Theta)(\lambda_{i,j})= \Ind \left(\Theta(\lambda_{i,j})\right),
\end{equation}
where $\Ind(\Theta)\in\mathrm{Coh}_\Lambda(\CK(G))$ is given by \eqref{Ind}.
It is easy to see that $\Psi_{i,j}\in \mathrm{Coh}_\Lambda(\CK(G))$, and for regular dominant $\lambda\in \Lambda$ we have that 
\[
\Psi_{i,j}(\lambda)=\left[I_{\gamma_{\lambda, w_{i,j}}}(a_i, b_j)\right]=\left[\mathrm{Ind}_{P_{n-1,1}}^{G} \left(F_{i,j}\otimes  \chi_{a_i,b_j}\right)\right],
\]
where  $\gamma_{\lambda, w_{i,j}}$ is defined as in  \eqref{gammaw}, and $F_{i,j}$ is the irreducible finite-dimensional representation of $\mathrm{GL}_{n-1}(\BC)$ with infinitesimal character 
$
(a_1, a_2, \dots, a_{i-1}, a_{i+1},\dots, a_n, b_1,b_2, \dots, b_{j-1}, b_{j+1},\dots,b_n).
$


For $i\in\{1,\ldots, n\}$, put
\[
[ i  ] := \{i,i+1\}\cap\{2,\ldots, n\} = \begin{cases}
\{2\}, & \textrm{if }i=1, \\
\{i, i+1\}, & \textrm{if }2\leq i<n, \\
\{n\}, & \textrm{if }i=n. 
\end{cases}
\]


 Now we can formulate the main result of this article, which gives another basis of the space ${\rm Coh}_\Lambda^{\min}(\CK(G))$ that is more concrete than the basis \eqref{basis1}.

\begin{thm} \label{thm:coh}
(a) For $i,j\in\{1,\ldots, n\}$, the equality
\[
  \Psi_{i,j}= \sum_{(k,l)\in  [i] \times [j]} \overline \Psi_{v_{k,l}} + \delta_{i,j}\cdot \overline \Psi_1
\]
holds in $\mathrm{Coh}_\Lambda(\mathcal K(G))$,  where $\delta_{i,j}$ is the Kronecker symbol.

(b) For $i, j\in \{2, \ldots, n\}$, 
\[
\overline{\Psi}_{v_{i,j}} = \sum_{i\leq k\leq n, \, j\leq l \leq n}(-1)^{k- i + l - j}\Psi_{k, l}+(-1)^{i+j-1}(n-\max\{i,j\}+1)\overline{\Psi}_1.
\]

(c) For $i, j\in \{1,\ldots, n\}$,
\begin{align*}
\Psi_{1,j} & = \sum_{2\leq k\leq n}(-1)^k \Psi_{k,j}  + (-1)^{j-1} \overline{\Psi}_1, \\
\Psi_{i,1} & = \sum_{2\leq l\leq n}(-1)^l \Psi_{i,l}  + (-1)^{i-1} \overline{\Psi}_1.
\end{align*}

(d) The coherent families $\Psi_{i,j}$, $i, j\in \{2,\ldots, n\}$ and $\overline{\Psi}_1$ form a basis of ${\rm Coh}_\Lambda^{\min}(\CK(G))$.
\end{thm}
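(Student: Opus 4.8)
The plan is to prove (a) first and then deduce (b), (c), (d) from it by linear algebra. For (a) it is enough to verify the claimed identity after evaluating both sides at one regular dominant $\lambda\in\Lambda$: both members lie in $\mathrm{Coh}_\Lambda(\mathcal K(G))$, which has $\{\overline\Psi_w\}_{w\in\mathfrak S_n}$ as a basis by Proposition \ref{base}, and if one expands both sides in this basis and evaluates at a regular dominant $\lambda$, the values $\overline\Psi_w(\lambda)=[\overline X(\gamma_{\lambda,w})]$ are pairwise non-isomorphic irreducibles (distinct $w$ give distinct Langlands parameters since the $a_i$ are distinct), hence linearly independent in $\mathcal K(G)$, so agreement at that single $\lambda$ forces agreement of all coefficients. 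Fix such a $\lambda=(a_1,\dots,a_n,b_1,\dots,b_n)$. By the identity recorded after \eqref{Psi-def}, $\Psi_{i,j}(\lambda)=[\mathrm{Ind}_{P_{n-1,1}}^{G}(F_{i,j}\otimes\chi_{a_i,b_j})]$, where $F_{i,j}$ is the finite-dimensional representation of $\GL_{n-1}(\BC)$ whose infinitesimal character is $\lambda$ with $a_i$ and $b_j$ deleted, so the task is to decompose this induced module in $\mathcal K(G)$. By Proposition \ref{Ind-BD} it has Gelfand--Kirillov dimension $2n-2$, hence every composition factor has Gelfand--Kirillov dimension $\le 2n-2$, and by Corollary \ref{cohm} (applied at $\lambda$) each such factor is either one of the $(n-1)^2$ irreducibles $\overline X(\gamma_{\lambda,v_{k,l}})$, $k,l\in\{2,\dots,n\}$, of Gelfand--Kirillov dimension $2n-2$, or the finite-dimensional $\overline X(\gamma_{\lambda,1})$. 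Writing $\Psi_{i,j}(\lambda)=\sum_{k,l\in\{2,\dots,n\}}m_{k,l}[\overline X(\gamma_{\lambda,v_{k,l}})]+m_0[\overline X(\gamma_{\lambda,1})]$ with non-negative integers $m_{k,l},m_0$, assertion (a) is exactly that $m_0=\delta_{i,j}$ and that $m_{k,l}=1$ for $(k,l)\in[i]\times[j]$ and $m_{k,l}=0$ otherwise.

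To pin down the multiplicities I would proceed as follows. For $m_0$, use Frobenius reciprocity: the finite-dimensional $\overline X(\gamma_{\lambda,1})=F_\lambda$ can occur in $\mathrm{Ind}_{P_{n-1,1}}^{G}(F_{i,j}\otimes\chi_{a_i,b_j})$ only in relation to the corresponding Jacquet module of $F_\lambda$, and a branching/weight computation for $\GL_{n-1}(\BC)\times\BC^\times\subset\GL_n(\BC)$ shows this happens precisely for $i=j$, with multiplicity one. For the multiplicities $m_{k,l}$ of the Gelfand--Kirillov-dimension-$(2n-2)$ factors, I would use the invariants advertised in the abstract. Bernstein degree is additive on $\mathcal K(G)$ within a fixed Gelfand--Kirillov dimension and the finite-dimensional factor contributes nothing, so Proposition \ref{Ind-BD} gives the single relation $\dim F_{i,j}=\sum_{k,l}m_{k,l}\cdot\mathrm{Bdeg}\,\overline X(\gamma_{\lambda,v_{k,l}})$, the degrees on the right being themselves evaluable from Goldie rank polynomials and Weyl-type dimension formulas for $\mathfrak S_n$; coupling this with the further linear constraints furnished by the Goldie rank polynomial of $\Psi_{i,j}$ (a polynomial in $\lambda$, hence a whole family of functionals) and with the singular values $\overline\Psi_{w_{i,j}}(\lambda^0_{i,j})$ provided by Theorem \ref{thm:sing} serves to separate the individual $m_{k,l}$. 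A more combinatorial alternative is to replace $F_{i,j}$ by its BGG resolution, use transitivity of parabolic induction to write $\Psi_{i,j}(\lambda)$ as an alternating sum of principal series of $\GL_n(\BC)$ at a regular integral infinitesimal character, expand each by the Kazhdan--Lusztig decomposition for $\mathfrak S_n$, and collect terms.

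Granting (a), parts (b), (c), (d) are formal. Read for $i,j\in\{2,\dots,n\}$, (a) expresses $\Psi_{i,j}$ as $\overline\Psi_{v_{i,j}}$ plus terms $\overline\Psi_{v_{k,l}}$ with $i\le k$, $j\le l$, $(k,l)\ne(i,j)$, plus $\delta_{i,j}\overline\Psi_1$; this triangular system inverts by a finite-difference (inclusion--exclusion) summation, and carrying out the telescoping --- while tracking the boundary case $[n]=\{n\}$ and the coefficient of $\overline\Psi_1$ --- produces the formula of (b), including the factor $n-\max\{i,j\}+1$. Part (c) follows by substituting the (a)-relations for $\Psi_{1,j}$ and $\Psi_{i,1}$ (where $[1]=\{2\}$) and re-expressing the resulting single terms $\overline\Psi_{v_{2,\bullet}}$ through (b). For (d): by (b) the families $\Psi_{i,j}$ with $i,j\in\{2,\dots,n\}$ together with $\overline\Psi_1$ generate the basis \eqref{basis1} of ${\rm Coh}_\Lambda^{\min}(\mathcal K(G))$, hence span it; being $(n-1)^2+1=\dim{\rm Coh}_\Lambda^{\min}(\mathcal K(G))$ in number, they are a basis.

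The main obstacle is the extraction of the individual multiplicities $m_{k,l}$: Bernstein degree alone gives just one equation, so one genuinely needs the finer information carried by the Goldie rank polynomials --- or, equivalently, a careful analysis of the cancellations in the Kazhdan--Lusztig expansion of that alternating sum of principal series --- in order to separate them. By contrast the structural inputs (Propositions \ref{thmin} and \ref{Ind-BD}, Corollary \ref{cohm}, the reduction to one regular dominant point, and the linear algebra of the previous paragraph) are routine.
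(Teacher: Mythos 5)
The architecture of your proposal is essentially correct, and it mirrors the paper's: reduce to one regular dominant $\lambda$, decompose $\Psi_{i,j}(\lambda)=\sum m_{k,l}[\overline X(\gamma_{\lambda,v_{k,l}})]+m_0[F_\lambda]$ into the possible constituents identified by Corollary \ref{cohm}, pin down $m_0$ and the $m_{k,l}$, and then derive (b), (c), (d) from (a) by the obvious linear algebra. Your observations that (b) is an inclusion--exclusion inversion of the triangular system from (a), that (c) follows by substitution, and that (d) is a span-plus-dimension-count, are all exactly what the paper does. Your Frobenius-reciprocity argument for $m_0$ is also a plausible alternative to the paper's minimal-$K$-type plus rearrangement-inequality argument for $i\ne j$, though it is only sketched.

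The genuine gap is the central multiplicity computation, which you explicitly defer. You correctly recognize that Bernstein degree, viewed as a polynomial function of $\lambda$, carries enough information to separate the $m_{k,l}$ — this is indeed the idea. But the proposal never establishes the identity that makes the idea bite, namely (Proposition \ref{prop:BD})
\[
\dim F_{i,j} \;=\; \sum_{(k,l)\in[i]\times[j]} c_{v_{k,l}}(\lambda),
\]
where $c_{v_{k,l}}(\lambda)=p_{y_k}(-\bd{a})\,p_{y_l}(-\bd{b})$ are the Goldie-rank-polynomial products. This is the technical heart of the paper, and proving it requires: Joseph's expression for $p_y$ as a signed sum of $h_L(x\,\cdot)$ with coefficients $(-1)^{\ell(x)+\ell(y)}P_{x,y}(1)$; identification of the minimal cell elements $(w_0v_{i,j})_{\min}=y_j$ via Robinson--Schensted (Lemma \ref{lem:min}, Corollary \ref{cor:min}); Douglass's inversion formula relating the two types of Deodhar parabolic Kazhdan--Lusztig polynomials; and Brenti's explicit Dyck-partition evaluation showing the inverse matrix is the Jordan block \eqref{JD}. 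Without this, ``separate the individual $m_{k,l}$'' remains an assertion, not an argument. Relatedly, you tacitly treat $c_{v_{k,l}}(\lambda)$ as \emph{the} Bernstein degree of $\overline X(\gamma_{\lambda,v_{k,l}})$, whereas Joseph's theory only gives it up to an unknown positive rational $d_{v_{k,l}}$ (\eqref{const}); the normalization $d_{v_{k,l}}=1$ is itself a nontrivial claim (Proposition \ref{cor:d}) that the paper proves by comparing the two expansions of $c(I_{\gamma_{\lambda,w_{i,j}}}(a_i,b_j))$ and invoking the linear independence of the $c_{v_{k,l}}$ and the multiplicity-one occurrence of $\overline X(\gamma_{\lambda,w_{i,j}})$. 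Your invocation of Theorem \ref{thm:sing} here corresponds to an alternative route the authors considered but did not ultimately use; it can work (evaluate the Bernstein degree at the singular point $\lambda^0_{i,j}$ where it is $1$), but again requires the same underlying combinatorial identity for $\dim F_{i,j}$ in terms of $c_{v_{k,l}}$, so it does not bypass Proposition \ref{prop:BD}.
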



By using  Proposition \ref{thmin}, Theorem  \ref{nonint}, 
Theorem \ref{thm:coh} (b), \cite[Corollary 7.3.23]{V81} and the Jantzen-Zuckerman translation principle, we have the following corollary.

\begin{cor}
Every irreducible Casselman-Wallach representation of 
$G$ with minimal nonzero Gelfand-Kirillov dimension occurs in the composition series of some representations that are  induced   from  finite-dimensional representations of $P_{n-1,1}$ (via normalized smooth induction). 
\end{cor}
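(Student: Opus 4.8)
The plan is to reduce the statement to Theorem \ref{thm:coh}(b) by a Grothendieck-group multiplicity computation, after dispatching the non-integral case separately. Let $\pi$ be an irreducible Casselman--Wallach representation of $G$ with Gelfand--Kirillov dimension $2n-2$, and write $\pi=\overline X(\gamma)$ with $\gamma\in\CP(G)$. By Proposition \ref{thmin}, $\gamma$ is not totally ordered but contains a totally ordered submultiset of cardinality $n-1$, so we may write $\gamma=\gamma'\cup\{(a_0,b_0)\}$ as in the discussion around \eqref{ind}. If $\gamma$ is not integral, then Theorem \ref{nonint} gives $\pi\cong I_\gamma(a_0,b_0)=\mathrm{Ind}_{P_{n-1,1}}^G\!\big(F_{\lambda'}\otimes\chi_{a_0,b_0}\big)$; since $F_{\lambda'}\otimes\chi_{a_0,b_0}$ is a finite-dimensional representation of $L$, inflated to $P_{n-1,1}$, this already exhibits $\pi$ itself as a representation of the asserted kind. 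So from now on I would assume $\gamma$ integral and fix $\Lambda$ as in the paragraph before Proposition \ref{base}, with $\Lambda_1$ chosen to contain the infinitesimal character of $\pi$.

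The first real step is to realize $[\pi]$ as a value of one of the coherent families $\overline\Psi_{v_{i,j}}$. When $\pi$ has regular infinitesimal character this is transparent: writing the infinitesimal character as a dominant element $\lambda\in\Lambda$, we have $\gamma=\gamma_{\lambda,w}$ for a unique $w\in\mathfrak S_n$, so $[\pi]=\overline\Psi_w(\lambda)$ by Proposition \ref{base}; and because $\pi$ has Gelfand--Kirillov dimension $2n-2$ and $\gamma$ is not totally ordered, Corollary \ref{cohm} forces $w$ to be some $w_{i,j}$, i.e. some $v_{k,l}$ with $k,l\in\{2,\dots,n\}$. In general -- for possibly singular infinitesimal character -- the same conclusion, that every integral irreducible representation of Gelfand--Kirillov dimension $2n-2$ is of the form $\overline\Psi_{v_{i,j}}(\lambda)$ for some $i,j\in\{2,\dots,n\}$ and some dominant $\lambda\in\Lambda$, follows from Corollary \ref{cohm}, \cite[Corollary 7.3.23]{V81} and the Jantzen--Zuckerman translation principle; this is exactly the package of results already invoked to identify $\mathrm{Coh}_\Lambda^{\min}(\CK(G))$ and its basis \eqref{basis1}. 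Evaluating the identity of Theorem \ref{thm:coh}(b) at such a $\lambda$ then expresses $[\pi]$ in $\CK(G)$ as an explicit $\BZ$-linear combination of the classes $\Psi_{k,l}(\lambda)$ ($i\le k\le n$, $j\le l\le n$) and $\overline\Psi_1(\lambda)$.

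To finish I would unpack this combination. By \eqref{Psi-def}, $\Psi_{k,l}(\lambda)=\mathrm{Ind}\!\big(\Theta(\lambda_{k,l})\big)$; since $\Theta$ is the coherent family through the finite-dimensional representations of $L$, each $\Theta(\lambda_{k,l})$ equals $\varepsilon_{k,l}$ times the class of a single finite-dimensional representation of $L$ for some $\varepsilon_{k,l}\in\{0,\pm1\}$, so that $\Psi_{k,l}(\lambda)=\varepsilon_{k,l}\,[J_{k,l}]$, where $J_{k,l}$ is the honest $G$-representation obtained by inflating that finite-dimensional representation to $P_{n-1,1}$ and inducing. Moreover $\overline\Psi_1(\lambda)=[\overline X(\gamma_{\lambda,1})]$ is the class of a finite-dimensional representation of $G$, of Gelfand--Kirillov dimension $0$. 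Every $J_{k,l}$, being induced from a finite-dimensional representation of the maximal parabolic $P_{n-1,1}$, has Gelfand--Kirillov dimension at most $\dim_{\BR}(G/P_{n-1,1})=2n-2$, hence so do all of its composition factors; therefore, comparing on both sides of the evaluated identity the coefficient of the basis vector $[\pi]$ of $\CK(G)$ -- which has Gelfand--Kirillov dimension $2n-2\neq 0$, so that the finite-dimensional term contributes nothing -- one obtains
\[
1=\sum_{i\le k\le n,\ j\le l\le n}(-1)^{k-i+l-j}\,\varepsilon_{k,l}\,[J_{k,l}:\pi],
\]
where $[J_{k,l}:\pi]\in\BZ_{\ge 0}$ denotes the multiplicity of $\pi$ among the composition factors of $J_{k,l}$. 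Since the left-hand side is nonzero, some $[J_{k,l}:\pi]>0$, i.e. $\pi$ occurs in the composition series of $J_{k,l}$, which is induced from a finite-dimensional representation of $P_{n-1,1}$; this is what we want.

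The step I expect to be the main obstacle is the realization $[\pi]=\overline\Psi_{v_{i,j}}(\lambda)$ in the case of a singular infinitesimal character: one must use the Jantzen--Zuckerman translation principle to compare the singular block with a regular one and to control which coherent families vanish, or coincide, upon translation to the wall, ensuring that a minimal-Gelfand--Kirillov-dimension irreducible sitting on a wall still arises from one of the families $\overline\Psi_{w_{i,j}}$ rather than from some $\overline\Psi_w$ of larger generic dimension. This is the same analysis underlying the equality $\dim\mathrm{Coh}_\Lambda^{\min}(\CK(G))=(n-1)^2+1$ and the basis \eqref{basis1}. The remaining points -- that the values of the finite-dimensional coherent family $\Theta$ are $0$ or $\pm$ the class of a single finite-dimensional representation, and that parabolic induction from a finite-dimensional representation of $P_{n-1,1}$ produces Gelfand--Kirillov dimension at most $2n-2$ -- are routine.
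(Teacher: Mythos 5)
Your proof is correct and follows essentially the same route the paper indicates (it cites exactly Proposition \ref{thmin}, Theorem \ref{nonint}, Theorem \ref{thm:coh}(b), \cite[Corollary 7.3.23]{V81}, and the translation principle, but leaves the details to the reader); your write-up simply fleshes out those details. Two minor remarks: since you choose $\lambda$ dominant for $G$, each $\lambda_{k,l}$ is automatically dominant for $L$ (deleting one entry from a dominant $n$-tuple leaves a dominant $(n-1)$-tuple, and the last coordinate carries no dominance condition), so in fact $\varepsilon_{k,l}\in\{0,1\}$ rather than $\{0,\pm1\}$ — harmless, but slightly simpler; and the statement only asks that $\pi$ occur in \emph{some} composition series of an induced representation, so the positivity/nonvanishing bookkeeping you do at the end is exactly the right amount of care and nothing more is needed.
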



In Proposition \ref{cor:d}, we will determine the Bernstein degrees of the representations $\overline X(\gamma_{\lambda,w_{i,j}})$ for regular dominant $\lambda\in \Lambda$ and distinct $i,j\in \{1,2,\cdots, n\}$. 
This combined with the parabolic Kazhdan-Lusztig theory, enables us to prove Theorem \ref{thm:coh}.

In general, the transition matrices between irreducible representations and standard representations in terms of Kazhdan-Lusztig polynomials are very complicated (see \cite{V81, V82}). For irreducible representations with minimal Gelfand-Kirillov dimension and the parabolically induced representations considered in this article, the transition matrices are much simpler.
It is clear that our method also works for highest weight modules and Verma modules of Lie algebras, and we refer the readers to \cite[Chapter 8]{H08} for their Kazhdan-Lusztig theory.

We will prove Theorem \ref{thm:coh} through Sections \ref{sec3}--\ref{sec5} in this article. Our method is to study the Bernstein degrees and Goldie rank polynomials, which seems to be new, and our results  partially extend the previous studies of degenerate principal series representations  which use  different tools (\cf \cite{HL99, A12, G17, X18}).

\section{Proof of Proposition \ref{thmin}} \label{sec2}
This section is devoted to the proof of Proposition \ref{thmin}, which will not be used in  other parts of this article.

Part (a) is well-known from the Langlands classification. Since $\overline{X}(\gamma)$ is the irreducible subquotient of the principal series representation containing the minimal $K$-type ($K:=\mathrm U(n)$, ``minimal" in the sense of Vogan, \cf \cite{V79b}), the ``if" part of (b) is implied by (a).  Let us prove the ``only if" part of (b). 

Write
\[
\gamma = \gamma_1\cup\cdots\cup \gamma_k
\]
as a disjoint union of multisets such that each $\gamma_i$ is integral but $\gamma_i \cup \gamma_j$ is not integral for any $i\neq j$. Let $n_i := |\gamma_i |$ (the cardinality) and assume that $n_1\geq\cdots \geq n_k$. We first prove that if $\Dim \overline{X}(\gamma)=2n-2$, then $n_1\geq n-1$. Here and after, Dim stands for the Gelfand-Kirillov dimension. 

Let $P :=P_{n_1,\ldots, n_k}$ be the standard parabolic subgroup of $G$ of type $(n_1,\ldots, n_k)$
with Levi subgroup $L := \GL_{n_1}(\BC)\times\cdots\times \GL_{n_k}(\BC)$. Write 
\[
\overline{X}_L(\gamma):=\overline{X}(\gamma_1)\widehat\otimes\cdots\widehat\otimes \overline{X}(\gamma_k)
\quad (\textrm{the completed projective tensor product}),
\]
which is an irreducible representation of $L$.
It follows from the Kazhdan-Lusztig-Vogan duality (\cite{V82}) that
\begin{equation} \label{irred}
\overline{X}(\gamma) \cong \Ind^G_{P} \left( \overline{X}_L(\gamma) \right). 
\end{equation}

Let us sketch a proof of this fact. We first reduce \eqref{irred} to the case that $\overline{X}(\gamma)$ 
has regular infinitesimal character. For each $1\leq i\leq k$, there exists a unique pair consisting of a permutation  $w_i\in \FS_{n_i}$ and a dominant element $\lambda_i=(\bd{a}_i, \bd{b}_i) \in \BC^{n_i} \times \BC^{n_i}$ such that
\begin{itemize}
    \item $\gamma_i$ is of the form $\gamma_{\lambda_i, w_i}$ in the sense of \eqref{gammaw};
    \item  $w_i$ is  the minimal length element of the double coset $\FS_{n_i, \bd{a}_i} w_i \FS_{n_i, \bd{b}_i} $, where $\FS_{n_i, \bd{a}_i}$ and $\FS_{n_i, \bd{b}_i}$ are the stabilizers of $\bd{a}_i$ and $\bd{b}_i$ in $\FS_{n_i}$ respectively.
\end{itemize}
Write 
\[
w:=(w_1,w_2, \dots,w_k)\in \FS_{n_1}\times \FS_{n_2}\times \dots\times \FS_{n_k}\subset \FS_n
\]
and 
\[
 \lambda:=(\bd{a}_1, \bd{a}_2, \dots, \bd{a}_k, \bd{b}_1, \bd{b}_2, \dots, \bd{b}_k)\in \BC^n\times \BC^n. 
\]
 Put 
\[
\Lambda^\prime:= \lambda+\BZ^n\times \BZ^n \subset \mathbb{C}^n\times \mathbb{C}^n=(\mathbb{C}^{n_1}\times \mathbb{C}^{n_1})\times\dots \times (\mathbb{C}^{n_k}\times \mathbb{C}^{n_k}).
\]
As before, we have a space ${\rm Coh}_{\Lambda^\prime}(\CK(L))$ of coherent families on $\Lambda^\prime$, and a linear map 
\[
\Ind: {\rm Coh}_{\Lambda^\prime}(\CK(L))\rightarrow {\rm Coh}_{\Lambda^\prime}(\CK(G))
\]
induced by the parabolic induction. 

Let $\overline{\Psi}_{L, w}\in {\rm Coh}_{\Lambda^\prime}(\CK(L))$ be the coherent family such that 
\[
\overline{\Psi}_{L, w}(\lambda') = [ 
 \overline{X}(\gamma_{\lambda_1',w_1})\widehat\otimes \cdots\widehat\otimes \overline{X}(\gamma_{\lambda_k',w_k})]
\]
for every   element $\lambda'=(\lambda'_1, \dots, \lambda'_k)\in \Lambda^\prime$ with each $\lambda'_i\in \mathbb{C}^{n_i}\times \mathbb{C}^{n_i}$ regular and dominant, where $\gamma_{\lambda'_i, w_i}$ is defined as in \eqref{gammaw}. By \cite[Theorem 6.18]{SV80} (\cf \cite[(8.7.3)]{V81}) and the description of $w_i$, $1\leq i\leq k$, we have that 
\[
\overline{\Psi}_{L, w} (\lambda)  = [\overline{X}_L(\gamma)]\quad \textrm{and} \quad \overline{\Psi}_w(\lambda) = [\overline{X}(\gamma)].
\]
Thus to prove \eqref{irred}, we only need to show that $\Ind(\overline{\Psi}_{L, w}) = \overline{\Psi}_w$, for which it suffices to prove \eqref{irred} when $\lambda$ is regular. 

Assume now that $\lambda$ is regular. By \cite[Theorem 10.1]{V82}, the blocks $B$ (\cite[Definition 1.14]{V82}) and  $B_L$ of $\overline{X}(\gamma)$ and $\overline{X}_L(\gamma)$ respectively, are in duality with the same block $\check B$ of another reductive group $\check G$.  Moreover, we identify $B$ and $B_L$ with a set of regular characters $\phi$ of the diagonal torus (see \cite[Definition 6.6.1]{V81} and \cite[(2.17)]{V82}), and write $X(\phi)$ and $X_L(\phi)$ for the standard representations of $G$ and $L$ respectively. By \cite[Theorem 13.13]{V82}, we have the equality of multiplicities (see \cite[(1.1) and (1.2)]{V82} for the notation)
\begin{equation}\label{eq:mult}
M(X(\phi), \overline{X}(\gamma) ) = \epsilon_{\gamma,\phi}\cdot m( \overline{X}_{\check G}(\check\gamma), X_{\check G}(\check\phi) ) = M( X_L(\phi), \overline{X}_L(\gamma))
\end{equation}
for some $\epsilon_{\gamma,\phi}=\pm1$.
Working within the Grothendieck group, it follows that
\begin{align*}
\left[\Ind^G_{P} \left(\overline{X}_L(\gamma)\right)\right] & = \Ind^G_P \left(\sum_{\phi\in B_L}
M(X_L(\phi), \overline{X}_L(\gamma))[ X_L(\phi)]\right) \\
& = \sum_{\phi\in B_L}M(X_L(\phi), \overline{X}_L(\gamma)) [\Ind^G_P(X_L(\phi))] \\
& = \sum_{\phi\in B}M(X(\phi), \overline{X}(\gamma) ) [X(\phi)] \\
& = [\overline{X}(\gamma)],
\end{align*}
where the penultimate equality follows from \eqref{eq:mult} and induction in stages. 
This proves \eqref{irred}.

From \eqref{irred} and \cite[Corollary 5.0.10]{B98}, it follows that 
\[
\Dim \overline{X}(\gamma) = \sum^k_{i=1} \Dim \overline{X}(\gamma_i) + \dim_\mathbb{R} (\frak{g}/\frak{p}), 
\]
where $\frak{g}$ and $\frak{p}$ are the Lie algebras of $G$ and $P$ respectively. If $\Dim \overline{X}(\gamma)=2n-2$, then $\dim_\BR(\frak{g}/\frak{p})\leq 2n-2$, which forces that $n_1\geq n-1$. 

If $n_1=n-1$ so that $\gamma$ is not integral, then $\Dim \overline{X}(\gamma) = 2n-2 = \dim_\BR(\frak{g}/\frak{p})$ if and only if $\Dim \overline{X}(\gamma_1)=0$, which is equivalent to that $\gamma_1$ is totally ordered by part (a).

If $n_1=n$, then $\gamma$ is integral, in which case the proposition follows easily from \cite[Theorem 3.20]{BV85}.

\section{Bernstein degree} \label{sec3}

\subsection{Primitive ideals} We recall some generalities from \cite{BV85}, applied to the group
 $G=\GL_n(\BC)$ ($n\geq 2$). Recall that $\Fg=\Lie(G)$, whose complexification is \begin{equation}\label{coml5}
 \Fg_\BC=\Fg\times \Fg
 \qquad \textrm{as in \eqref{coml}}, 
  \end{equation}
 with  universal enveloping algebra
\[
\CU(\Fg_\BC) = \CU(\Fg)\otimes_\BC \CU(\Fg).
\]
Assume that
\[
\lambda= (\bd{a},\bd{b}):= (a_1,\ldots, a_n, b_1,\ldots, b_n)\in \Lambda
\]
is regular dominant, where $\Lambda$ is defined as in the Introduction.  Fix the standard upper triangular Borel subgroup $B$, and the maximal compact subgroup $K=\RU(n)$ of $G$. For each $w\in \frak S_n$,  let 
\[
X(\gamma_{\lambda,w}):=\Ind^G_B(\chi_{a_1, b_{w^{-1}(1)}}\otimes \cdots\otimes \chi_{a_n,  b_{w^{-1}(n)}})
\]
be the principal series representation of $G$, where $\gamma_{\lambda,w}$ is defined in \eqref{gammaw}. Then  $\overline X(\gamma_{\lambda,w})$ is the Langlands subquotient of $X(\gamma_{\lambda,w})$, that is, the unique irreducible subquotient containing the $K$-type of extremal weight
\begin{equation} \label{ex-wt}
\bd{a} - w \bd{b}:=(a_1 -b_{w^{-1}(1)},\ldots, a_n - b_{w^{-1}(n)}).
\end{equation}

For $w\in \FS_n$, let $V^L(w)$ be the  left cell representation of $\FS_n$ defined by  \cite[(3.17)]{BV85}. Then $V^L(w)$ is irreducible in this case, and the Springer correspondence (\cf \cite[Theorem 2.18]{BV85}) associates to it a nilpotent orbit $\CO(w)$ in $\frak{g}^* := \Hom_\BC(\Fg,\BC)$. For a general nilpotent orbit $\CO$ in $\Fg^*$, write $\CO_{\BR}:=(\CO\times \CO)\cap \sqrt{-1}\Fg_\BR^*$, which is a   nilpotent $G$-orbit in $\sqrt{-1}\Fg_\BR^*$.  Here $\Fg_\BR^*:=\Hom_\BR(\Fg,\BR)$, which is identified with a real form of $\Fg^*\times \Fg^*$ via the identification \eqref{coml5}.
Let $w_0$ be the longest element of $\frak S_n$. 
By \cite[Theorem 3.20]{BV85} (see also 
\cite{BV82}), the wavefront set of  the representation $\overline{X}(\gamma_{\lambda,w})$ is the topological closure $\overline{\CO(ww_0)_{\BR}}$ of $\CO(ww_0)_{\BR}$ in $\sqrt{-1}\Fg^*_\BR$.

Let $\frak{h}$ be the standard Cartan subalgebra of $\Fg$, which is canonically identified with $\BC^n$. Let $\Phi(\Fg, \Fh)$ be the corresponding root system, in which a positive subsystem is determined by $\Fb :=\Lie(B)$. For any $\nu\in \Fh^*$, which is identified with $\Fh$ under the standard bilinear form $\langle \ ,\rangle$ on $\Fh$, 
let $L(\nu)$ be the unique irreducible quotient of the Verma module 
$
M(\nu):= \CU(\Fg)\otimes_{\CU(\Fb)}\BC_{\nu-\rho},
$
where $\rho$ is the half sum of the positive roots in $\Phi(\Fg, \Fh)$,
and denote by $I(\nu)$ the primitive ideal
(namely,  annihilator ideals of  irreducible $\CU(\Fg)$-modules)
\[
I(\nu):={\rm Ann}_{\CU(\Fg)} L(\nu). 
\]
By \cite[Theorem 5.2]{J77}, the annihilator of $\overline{X}(\gamma_{\lambda,w})$ in $\CU(\Fg_\BC)$ is 
\[
\check I (-  w_0w^{-1} \bd{a}) \otimes \CU(\Fg) + \CU(\Fg)\otimes \check I(- w_0 w \bd{b}),
\]
where  $\check{\,}$ is the anti-involution on $\CU(\Fg)$ induced by $-1$ on $\Fg$. Then for $x,  y\in \frak S_n$, 
\[
\check I (- w_0x^{-1} \bd{a}) =\check  I (-  w_0y^{-1} \bd{a}) \quad \Longleftrightarrow \quad  xw_0  \sim_R  yw_0 
\]
and likewise, 
\[
\check I (- w_0 x \bd{b} ) =\check  I (-   w_0 y \bd{b} ) \quad \Longleftrightarrow \quad w_0 x \sim_L w_0 y \quad \Longleftrightarrow \quad xw_0 \sim_L yw_0,
\]
where $\sim_L$ and $\sim_R$ are the equivalence relations in \cite{KL79} that define the  left cells and  right cells respectively. 
For the last equivalence, we note that 
$ w\mapsto w_0w w_0$ induces an involution on the left cells (\cf \cite[Corollary 6.2.10]{BB05}).

For a Harish-Chandra module $X$ of $G$, denote by $c(X)$ the Bernstein degree of $X$ (see \cite[Theorem 1.1]{V78}). By \cite[6.1]{J80b}, up to a nonzero rational number,  $c(\overline{X}(\gamma_{\lambda,w}))$ is given by
\begin{equation} \label{bd}
c_w(\lambda):= {\rm rk} \left( \CU(\Fg)/I( -  w_0w^{-1} \bd{a}) \right)\cdot {\rm rk}\left( \CU(\Fg)/I(-w_0 w \bd{b})\right),
\end{equation}
where rk stands for the Goldie rank. That is,
\begin{equation} \label{const}
c(\overline{X}(\gamma_{\lambda,w})) = d_w\cdot c_w(\lambda)
\end{equation}
for some constant $d_w\in \mathbb{Q}^\times$, which is independent of the regular dominant element $\lambda = (\bd{a}, \bd{b})\in \Lambda$.
Here by abuse of notation, $$c(\overline{X}(\gamma_{\lambda,w})):=c(\overline{X}(\gamma_{\lambda,w})^{K-fin})$$ where $\overline{X}(\gamma_{\lambda,w})^{K-fin}$ denotes the space of $K$-finite vectors of $\overline{X}(\gamma_{\lambda,w})$, which is naturally a Harish-Chandra module.

\subsection{Induced representations}  \label{ssec:IR}
Recall that the maximal parabolic subgroup $P_{n-1,1}$ has Levi subgroup $L = \GL_{n-1}(\BC)\times \BC^\times$. 
Let $\Phi_L^+ = \Phi^+(\Fl, \Fh)$ be the set of positive roots for $\Fl:=\Lie(L)$ with respect to $\Fb\cap\Fl$, and denote by $\rho_L$ its half sum. Let $w_{L}$ be the longest element of $\frak S_{n-1}\subset \frak S_n$, that is,
\begin{equation}\label{wl}
w_{L} = \begin{pmatrix} 1 & 2 & \cdots & n-1 & n \\n-1 &  n-2 & \ldots & 1 & n\end{pmatrix}.
\end{equation}
The set $\frak S^{\underline{L}}$  of   minimal length representatives of the cosets $\frak S_{n-1}\backslash \frak S_n$ is given by 
\[
\begin{aligned}
\frak S^{\underline{L}} &= \{x\in \frak S_n   \mid x^{-1}(\alpha) \in \Phi^+(\Fg, \Fh)\textrm{ for all } \alpha\in \Phi_L^+\}\\
& = \{x_1,\ldots, x_n\},
\end{aligned}
\]
where for $i=1,\ldots, n$,
\begin{equation}\label{mini}
x_i := \begin{pmatrix}  1 & \cdots & i-1 & i & i+1 & \cdots & n \\ 1 & \cdots & i-1 &  n &  i & \cdots & n-1\end{pmatrix}.
\end{equation}
It follows directly that 
\[
x_i^{-1} x_j = w_{i,j},\quad i,j= 1,\ldots, n,
\]
where $w_{i,j}$ was defined in \eqref{cyc}.
The Bruhat order on $\frak S_n$ induces a total order on $\frak S^{\underline{L}}$ so that
\[
w_{L} w_0 = x_1 >\cdots > x_n =1.
\]
 Define 
 $y_i := w_{L} x_i$, $i=1,\ldots, n$,
 so that $\frak S^{\overline L}:=\{y_1,\ldots, y_n\}$ is the set of maximal length representatives of 
 $\frak S_{n-1}\backslash \frak S_n$, and the map 
 \[
 \frak S^{\underline L}\to \frak S^{\overline L},\quad x_i\mapsto y_i
 \]
 is the unique order-preserving bijection.

For $\nu\in\Fh^*$, define
\begin{equation}\label{poly}
h_L(\nu): = \prod_{\alpha\in \Phi_L^+}\frac{\langle\nu, \alpha\rangle}{\langle\rho_L, \alpha\rangle},
\end{equation}
which is naturally a polynomial function on $\Fh^*$. For $i,j\in\{1, 2, \ldots, n\}$ and $\lambda= (\bd{a}, \bd{b})= (a_1,\ldots, a_n, b_1,\ldots, b_n)\in \Lambda$, recall that the induced representation $I_{\gamma_{\lambda,w_{i,j}}}(a_i, b_j)$
was defined as in \eqref{ind} and $F_{i,j}$ denotes the irreducible finite-dimensional representation of $\mathrm{GL}_{n-1}(\BC)$ with infinitesimal character $(a_1, a_2, \dots, a_{i-1}, a_{i+1},\dots, a_n, b_1,b_2, \dots, b_{j-1}, b_{j+1},\dots,b_n)$.

\begin{lem} \label{lem:BD}
With the notations as above, the Bernstein degree of $I_{\gamma_{\lambda,w_{i,j}}}(a_i, b_j)$ is given by 
\[
c(I_{\gamma_{\lambda,w_{i,j}}}(a_i, b_j)) =\dim F_{i,j} = h_L(x_i \bd{a} ) \cdot h_L (x_j \bd{b}) = h_L(- y_i \bd{a}) \cdot h_L(- y_j \bd{b}).
\]
\end{lem}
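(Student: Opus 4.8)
The lemma is a chain of three equalities, namely $c(I_{\gamma_{\lambda,w_{i,j}}}(a_i,b_j))=\dim F_{i,j}$, then $\dim F_{i,j}=h_L(x_i\bd a)\cdot h_L(x_j\bd b)$, and finally $h_L(x_i\bd a)\cdot h_L(x_j\bd b)=h_L(-y_i\bd a)\cdot h_L(-y_j\bd b)$. The plan is to establish them one at a time. For the first equality, observe that $I_{\gamma_{\lambda,w_{i,j}}}(a_i,b_j)=\Ind_{P_{n-1,1}}^G(F_{i,j}\otimes\chi_{a_i,b_j})$ is, by definition, normalizedly induced from the finite-dimensional representation $F_{i,j}\otimes\chi_{a_i,b_j}$ of the maximal parabolic $P_{n-1,1}$, so \cite[Corollary 5.0.10]{B98} applies exactly as it did in Proposition \ref{Ind-BD} and yields $c(I_{\gamma_{\lambda,w_{i,j}}}(a_i,b_j))=\dim(F_{i,j}\otimes\chi_{a_i,b_j})=\dim F_{i,j}$.

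For the second equality I would use the Weyl dimension formula. From the explicit two-line form \eqref{mini} of $x_i$, the first $n-1$ coordinates of $x_i\bd a\in\BC^n$ are $(a_1,\dots,a_{i-1},a_{i+1},\dots,a_n)$, and likewise those of $x_j\bd b$ are $(b_1,\dots,b_{j-1},b_{j+1},\dots,b_n)$; these are precisely the two halves of the infinitesimal character of $F_{i,j}$. Since $\lambda$ is integral and regular dominant we have $a_1\succ a_2\succ\cdots\succ a_n$ and $b_1\succ b_2\succ\cdots\succ b_n$, so both $(n-1)$-tuples are dominant regular infinitesimal characters for $\GL_{n-1}(\BC)$. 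Under the complexification $\GL_{n-1}(\BC)\to\GL_{n-1}(\BC)\times\GL_{n-1}(\BC)$, the irreducible representation $F_{i,j}$ is an external tensor product $V_1\boxtimes\overline{V_2}$ of the irreducible algebraic representations of $\GL_{n-1}$ attached to these two infinitesimal characters, whence $\dim F_{i,j}=\dim V_1\cdot\dim V_2$. The Weyl dimension formula then gives $\dim V_1=\prod_{\alpha\in\Phi_L^+}\langle x_i\bd a,\alpha\rangle/\langle\rho_L,\alpha\rangle=h_L(x_i\bd a)$ by \eqref{poly} — here one uses that $\Phi_L^+$ consists of the roots $e_k-e_l$ with $1\le k<l\le n-1$, so $h_L$ only sees the first $n-1$ coordinates — and similarly $\dim V_2=h_L(x_j\bd b)$.

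For the third equality, recall $y_i=w_Lx_i$ where, by \eqref{wl}, $w_L$ reverses $1,\dots,n-1$ and fixes $n$; thus the first $n-1$ coordinates of $-y_i\bd a$ are $(-a_n,-a_{n-1},\dots,-a_{i+1},-a_{i-1},\dots,-a_1)$, obtained from $(a_1,\dots,a_{i-1},a_{i+1},\dots,a_n)$ by negating all entries and then reversing their order. Up to the nonzero constant $\big(\prod_{\alpha\in\Phi_L^+}\langle\rho_L,\alpha\rangle\big)^{-1}$, the function $h_L(\nu)$ equals the Vandermonde-type polynomial $\prod_{1\le k<l\le n-1}(\nu_k-\nu_l)$, and this polynomial is invariant under the simultaneous operation ``negate and reverse the first $n-1$ coordinates'': the factor $\nu_k-\nu_l$ is sent to $\nu_{n-l}-\nu_{n-k}$, and as $(k,l)$ runs over the pairs with $1\le k<l\le n-1$ so does $(n-l,n-k)$, so the product is unchanged (equivalently, negation contributes $(-1)^{\binom{n-1}{2}}$ and the reversal, being the longest element of $\FS_{n-1}$, contributes the matching sign $(-1)^{\binom{n-1}{2}}$, and the two cancel). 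Hence $h_L(-y_i\bd a)=h_L(x_i\bd a)$ and, in the same way, $h_L(-y_j\bd b)=h_L(x_j\bd b)$, which closes the chain.

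There is no serious obstacle here: the first equality is the cited theorem of Bernstein, and the other two amount to the Weyl dimension formula together with an elementary symmetry of the Vandermonde polynomial. The only point demanding care is fixing the conventions for how $x_i$ and $y_i=w_Lx_i$ act on weight vectors and checking that the resulting $(n-1)$-tuples are genuinely dominant for $\GL_{n-1}$ — this is where an index shift or a stray sign could slip in, so I would spell out the two-line notation of \eqref{mini} and \eqref{wl} explicitly before invoking \eqref{poly}.
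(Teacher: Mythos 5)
Your proof is correct and follows the same route as the paper: the first equality is Proposition \ref{Ind-BD}, the second is the Weyl dimension formula applied to the two $\GL_{n-1}(\BC)$-factors, and the third is the symmetry of $h_L$ under $\nu\mapsto -w_L\nu$. The paper phrases the last step abstractly as ``$w_L$ maps $\Phi_L^+$ onto $-\Phi_L^+$'' rather than through the Vandermonde product, but your explicit computation (negation and reversal each contribute $(-1)^{\binom{n-1}{2}}$) is just the concrete type-$A$ incarnation of that same fact, so there is no genuine difference in approach.
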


\begin{proof}
This follows from Proposition \ref{Ind-BD} and the well-known Weyl  dimension formula. The last equality follows from the fact that $w_{L}$ maps $\Phi_L^+$ onto $-\Phi_L^+$. 
\end{proof}

\section{Goldie rank} 

In this section we evaluate $\dim F_{i,j}$ given by Lemma \ref{lem:BD} in terms of  $c_{w_{i,j}}(\lambda)$ as in \eqref{bd}. The notations are as in the Introduction and Section \ref{sec3}. The main result of this section is as follows.

\begin{prop}\label{prop:BD}
For $i, j\in \{1,\ldots, n\}$, 
\[
\dim F_{i, j} = \sum_{(k,l)\in [i]\times [j]}c_{v_{k,l}}(\lambda).
\]
\end{prop}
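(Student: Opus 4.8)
The plan is to compute both sides explicitly and match them. By Lemma \ref{lem:BD} the left-hand side is $\dim F_{i,j}=h_L(x_i\bd{a})\cdot h_L(x_j\bd{b})$, and since $x_p\bd{a}$ has first $n-1$ coordinates $(a_1,\ldots,\widehat{a_p},\ldots,a_n)$ while $h_L$ involves only the roots $e_q-e_r$ with $1\le q<r\le n-1$, the Weyl dimension formula gives
\[
h_L(x_p\bd{a})=\frac{1}{N}\,D_p(\bd{a}),\qquad D_p(\bd{a}):=\prod_{\substack{1\le q<r\le n\\ q,r\neq p}}(a_q-a_r),\quad N:=\prod_{1\le q<r\le n-1}(r-q).
\]
In particular $\dim F_{i,j}=\tfrac{1}{N^2}D_i(\bd{a})D_j(\bd{b})$ factors into a polynomial in $\bd{a}$ times one in $\bd{b}$, each of degree $\binom{n-1}{2}$.

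For the right-hand side, recall $c_{v_{k,l}}(\lambda)={\rm rk}\!\left(\CU(\Fg)/I(-w_0v_{k,l}^{-1}\bd{a})\right)\cdot{\rm rk}\!\left(\CU(\Fg)/I(-w_0v_{k,l}\bd{b})\right)$. By the equivalences recalled in Section \ref{sec3} (Joseph's theorem, together with \cite{KL79,BB05}), the first factor depends only on the right cell of $v_{k,l}w_0$ in $\FS_n$ and the second only on its left cell. By Corollary \ref{cohm} and the Springer correspondence (\cite{BV85}), $\{v_{k,l}w_0\mid k,l\in\{2,\ldots,n\}\}=\{w_{i,j}w_0\mid i\neq j\}$ is precisely the two-sided cell $\mathbf{c}$ attached to the minimal nilpotent orbit, which has $(n-1)^2$ elements and, in type $A$, meets each pair (left cell, right cell) inside it in exactly one element. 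The first task is therefore a Robinson--Schensted computation with the cycles $w_{i,j}$: I would show that the right cell of $v_{k,l}w_0$ depends only on $k$ and its left cell only on $l$. Granting this, $c_{v_{k,l}}(\lambda)=A_k(\bd{a})\cdot B_l(\bd{b})$ with $A_k:={\rm rk}(\CU(\Fg)/I(-w_0v_{k,l}^{-1}\bd{a}))$ independent of $l$ and $B_l:={\rm rk}(\CU(\Fg)/I(-w_0v_{k,l}\bd{b}))$ independent of $k$, whence
\[
\sum_{(k,l)\in[i]\times[j]}c_{v_{k,l}}(\lambda)=\Big(\sum_{k\in[i]}A_k(\bd{a})\Big)\Big(\sum_{l\in[j]}B_l(\bd{b})\Big),
\]
and it suffices to prove the single-variable identity $\sum_{k\in[i]}A_k(\bd{a})=h_L(x_i\bd{a})$ (the $\bd{b}$-identity being the same statement).

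To prove this I would invoke Joseph's explicit description of the Goldie rank polynomials for the minimal orbit in type $A$. The $D_1,\ldots,D_n$ are $\FS_n$-harmonic of degree $\binom{n-1}{2}$ and span, modulo the single relation $\sum_{p=1}^n(-1)^pD_p=0$ (the Laplace expansion of a determinant with two equal columns), the irreducible $\FS_n$-module in which these Goldie rank polynomials live; pairing Joseph's formulas with the tableau bookkeeping above identifies the Goldie rank polynomial of the right cell of $v_{k,l}w_0$ as $A_k(\bd{a})=\frac{1}{N}\sum_{m=1}^{k-1}(-1)^{k-1-m}D_m(\bd{a})$ for $k\in\{2,\ldots,n\}$. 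Then $A_2=\frac1N D_1=h_L(x_1\bd{a})$ and, by the relation, $A_n=\frac1N D_n=h_L(x_n\bd{a})$, while for $2\le i<n$ the consecutive pair $[i]=\{i,i+1\}$ telescopes:
\[
A_i+A_{i+1}=\frac1N\Big(D_i+\sum_{m=1}^{i-1}\big((-1)^{i-1-m}+(-1)^{i-m}\big)D_m\Big)=\frac1N D_i=h_L(x_i\bd{a}).
\]
Multiplying the resulting $\bd{a}$- and $\bd{b}$-identities gives $\sum_{(k,l)\in[i]\times[j]}c_{v_{k,l}}(\lambda)=h_L(x_i\bd{a})h_L(x_j\bd{b})=\dim F_{i,j}$.

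The main obstacle is the bookkeeping that ties everything together on the nose: identifying the pair of tableaux of each $w_{i,j}w_0$, hence its left and right cells, and then matching each of those cells to the correct linear combination of the $D_p$ with the correct normalizing constant, so that the identity is an equality of polynomials and not merely up to scalar. Once this dictionary between the cells of $\mathbf{c}$ and the Goldie rank polynomials is in place, the remaining steps are the elementary telescoping above and the Vandermonde relation $\sum_{p}(-1)^pD_p=0$. (Alternatively, the single-variable identity $\sum_{k\in[i]}A_k(\bd{a})=h_L(x_i\bd{a})$ is the Bernstein-degree shadow of a composition-series identity for a degenerate principal series of $\GL_n(\BC)$ induced from $P_{n-1,1}$, cf.\ \cite{HL99,A12,G17,X18}; but the Goldie rank route is self-contained given Section \ref{sec3}.)
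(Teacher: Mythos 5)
Your high-level strategy coincides with the paper's: write $\dim F_{i,j}=h_L(x_i\bd{a})\,h_L(x_j\bd{b})$ by Lemma \ref{lem:BD}, use Joseph's factorization and cell theory to write $c_{v_{k,l}}(\lambda)=A_k(\bd{a})\,B_l(\bd{b})$ (this is \eqref{bd2} with $A_k=p_{y_k}(-\bd{a})$, $B_l=p_{y_l}(-\bd{b})$), reduce to a one-variable identity $\sum_{k\in[i]}A_k=h_L(x_i\bd{a})$, and close it by a telescoping combined with the Vandermonde relation $\sum_p(-1)^pD_p=0$. Your explicit formula $A_k=\frac{1}{N}\sum_{m=1}^{k-1}(-1)^{k-1-m}D_m$ is equivalent (via that Vandermonde relation) to $p_{y_k}(-\bd{a})=\sum_{m\geq k}(-1)^{m-k}h_L(-y_m\bd{a})$, which is what the paper establishes, and given this formula your telescoping is correct.

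However, there are two genuine gaps, both of which you flag yourself as ``the main obstacle.'' First, the Robinson--Schensted computation identifying $P(w_0w_{i,j})$ and $Q(w_0w_{i,j})$ and showing $(w_0v_{k,l}^{-1})_{\min}=y_k$, $(w_0v_{k,l})_{\min}=y_l$ is stated as something you ``would show''; the paper does this in Lemma \ref{lem:min} and Corollary \ref{cor:min}, and it is a necessary, nontrivial input. Second --- and this is the real crux --- you assert the explicit formula for $A_k$ by ``invoking Joseph's explicit description of the Goldie rank polynomials for the minimal orbit,'' but Joseph's result \cite{J80a} gives only $p_y(\nu)=\sum_x(L_y:M_x)h_L(x\nu)$; turning this into the clean formula you write requires actually computing the multiplicities $(L_y:M_x)$. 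This is precisely where the paper invests its effort: it passes through the Kazhdan--Lusztig conjecture, then Deodhar's parabolic Kazhdan--Lusztig polynomials, Douglass's inversion formula, and Brenti's Dyck-partition computation of $P^{L,q}_{u,v}$, culminating in the Jordan block \eqref{JD} from which the formula for $p_{y_k}$ follows. Observing that the Goldie rank polynomials must lie in the span of the $D_p$ modulo the single relation, as you do, pins them down only up to an $(n-1)$-dimensional ambiguity and does not by itself determine the coefficients or the normalization; without the KL computation (or an equivalent independent input), the identity ``on the nose'' is not established.

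So: correct architecture, correct target formulas, but the decisive computational step is left as an appeal to a source that does not directly supply it, and the proof as written is a plan rather than a complete argument.
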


\subsection{Cells and minimal elements} Firstly we recall some definitions and properties about cells and Young tableaux. The details can be found in \cite{S01} and \cite{B11}. 
 Consider the left cells in $\frak S_n$ whose associated Young diagram via the Robinson-Schensted algorithm has two columns of lengths $n-1$ and $1$ respectively. Denote this Young diagram by $\CD$, whose rows give the partition $[2,1^{n-2}]$ of $n$. The corresponding left cell representation of $\frak S_n$, denoted by
$\sigma$, is the reflection representation of dimension $n-1$. There are $n-1$ such left cells, whose disjoint union forms a double cell $\CC$ and  gives rise to the representation $\sigma  \otimes \sigma$ of $\frak S_n\times\frak S_n$. By direct calculation, 
\[
\CC=\{w_{ij}w_0 \mid  i, j \in \{1, 2,\ldots, n\}, i\neq j\} = \{ w_0w_{ij} \mid  i, j \in \{1, 2,\ldots, n\}, i\neq j\},
\]
where $w_0$ is the longest element of $\mathfrak S_n$ and the cycles $w_{i,j}$ are defined as in \eqref{cyc}.

For $w\in \frak S_n$, let $P(w)$ and $Q(w)$ be the insertion tableau and recording tableau associated to $w$ via the Robinson-Schensted algorithm. The tableaux $P(w)$ and $Q(w)$ are related by $Q(w)=P(w^{-1})$, and for $x, y\in \mathfrak S_n$,
\begin{align*}
& x \sim_L y \quad \Longleftrightarrow \quad Q(x) = Q(y),\\
& x \sim_R y \quad  \Longleftrightarrow \quad P(x)=P(y).
\end{align*}
Here, as mentioned in Section \ref{sec3}, $\sim_L$ and $\sim_R$ are the equivalence relations defined in \cite{KL79}.
Denote by $\CY$ the set of standard Young tableaux (filled with $1, 2, \ldots, n$) with shape $\CD$. It is well-known that
\[
\CC \to \CY \times \CY,\quad w\mapsto (P(w), Q(w))
\]
is a bijection. 

Each left cell in $\CC$ contains a unique  minimal element $w$ \`a la Brundan (\cite{B11}), that is, the two columns of $P(w)$ are labeled by $1,\ldots, n-1$ and $n$ from top to bottom respectively. We denote the set of such minimal elements by $\CC^{\min}$, so that $|\CC^{\min}| = n-1$.
For $w\in \CC$, denote by $w_{\min}\in \CC^{\min}$ the unique minimal element in the left cell containing $w$. 

\begin{lem} \label{lem:min} 
Let $y_1,\ldots, y_n\in \frak S^{\overline L}$ be given as in Section \ref{ssec:IR}. Then the followings hold.
\begin{enumerate}
 \item[(a)] For $1\leq i\neq j\leq n$,
 \[
 (w_0 w_{i,j})_{\min} = \begin{cases} y_j, & \textrm{if }i<j, \\
 y_{j+1}, & \textrm{if }i>j.
 \end{cases}
 \]
\item[(b)]
$\CC^{\min}=\{y_2,\ldots, y_n\}$.
\end{enumerate}
\end{lem}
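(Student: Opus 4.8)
\emph{Plan of proof.} The idea is to compute the Robinson--Schensted tableaux of the permutations in sight. Let $P_0$ be the standard tableau of shape $\CD$ whose first column reads $1,2,\dots,n-1$ from top to bottom and whose single second-column box holds $n$; by the definition of $\CC^{\min}$, an element $w\in\CC$ lies in $\CC^{\min}$ if and only if $P(w)=P_0$. For $m\in\{2,\dots,n\}$ let $Q_m$ be the standard tableau of shape $\CD$ whose second-column box holds $m$, so that these are exactly the elements of $\CY$. Since left cells are the fibres of $w\mapsto Q(w)$ and $|\CC^{\min}|=n-1$, the lemma reduces to three assertions: $(\mathrm{i})$ $P(y_m)=P_0$ for $2\le m\le n$; $(\mathrm{ii})$ $Q(y_m)=Q_m$ for $2\le m\le n$; and $(\mathrm{iii})$ $Q(w_0w_{i,j})=Q_j$ if $i<j$, while $Q(w_0w_{i,j})=Q_{j+1}$ if $i>j$. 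Indeed $(\mathrm{i})$--$(\mathrm{ii})$ exhibit $y_2,\dots,y_n$ as $n-1$ elements of $\CC^{\min}$ lying in pairwise distinct left cells, hence all of $\CC^{\min}$, which is (b); and $(\mathrm{iii})$, combined with $(\mathrm{i})$--$(\mathrm{ii})$, says $w_0w_{i,j}$ lies in the left cell whose unique minimal element is $y_j$ (resp.\ $y_{j+1}$), which is (a).

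These follow from one elementary remark together with a computation of one-line notations. The remark: if $w\in\FS_n$ has Robinson--Schensted shape $\CD$, then $P(w)$ and $Q(w)$ each have a single second-column box and are each determined by its entry (the first column being the remaining $n-1$ numbers in increasing order); moreover the second-column entry of $P(w)$ equals $\min\{\,w(q):w(p)<w(q)\text{ for some }p<q\,\}$, by the classical description of the first row of a Schensted insertion tableau, and --- tracing the step at which the box $(1,2)$ is created during insertion --- the second-column entry of $Q(w)$ equals $\min\{\,q:w(p)<w(q)\text{ for some }p<q\,\}$. Next, from $y_m=w_Lx_m$ with $w_L,x_m$ as in \eqref{wl}, \eqref{mini}, one computes the one-line form
\[
y_m=(n-1,n-2,\dots,n-m+1,\;n,\;n-m,n-m-1,\dots,1),
\]
a strictly decreasing word with the letter $n$ displaced to the $m$-th slot; its longest increasing and decreasing subsequences have lengths $2$ and $n-1$, so it has shape $\CD$, and the remark gives $P(y_m)_{1,2}=n$ (that is, $(\mathrm{i})$) and $Q(y_m)_{1,2}=m$ (that is, $(\mathrm{ii})$). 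Similarly $w_0w_{i,j}$ is the value-complement of the cycle $w_{i,j}$ of \eqref{cyc}, and unwinding the definitions shows it too is a strictly decreasing word with a single displaced letter: for $i<j$ it reads $(n,\dots,n-i+2,\;n-i,\dots,n-j+1,\;n-i+1,\;n-j,\dots,1)$, whose first slot admitting an earlier slot of smaller value is $q=j$; for $i>j$ it reads $(n,\dots,n-j+2,\;n+1-i,\;n-j+1,\dots,n+2-i,\;n-i,\dots,1)$, with that first slot being $q=j+1$. Since $w_0w_{i,j}\in\CC$ has shape $\CD$, the remark gives $Q(w_0w_{i,j})_{1,2}=j$, resp.\ $j+1$, which is $(\mathrm{iii})$.

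Combining $(\mathrm{i})$--$(\mathrm{iii})$ as above proves the lemma. The only step requiring care is the explicit transcription of the one-line notation of $w_0w_{i,j}=w_0\cdot w_{i,j}$ in the two cases $i<j$ and $i>j$: one must check that it is indeed strictly decreasing apart from a single displaced letter --- so that its Robinson--Schensted shape is $\CD$ and the remark applies --- and then correctly locate that letter (and, for $y_m$, the value it carries), including the extreme indices $i=1$ and $j=n$. This is routine but slightly fiddly bookkeeping, and is where essentially all the care is needed; I do not expect any conceptual obstacle beyond it.
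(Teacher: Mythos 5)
Your proposal is correct and is essentially the same as the paper's argument, which also computes $P(w_0w_{i,j})$ and $Q(w_0w_{i,j})$ directly and then identifies $y_i = w_0 w_{1,i}$; you simply supply the routine bookkeeping (the one-line notations, the shape check, and the observation that the $(1,2)$-entry of $Q(w)$ is the first ascent position of $w$ and the $(1,2)$-entry of $P(w)$ is the least value occurring at an ascent position) that the paper states without proof. The only cosmetic point is that your justification for the $P$-formula, ``by the classical description of the first row,'' is a bit vague; it is cleaner to deduce it from the $Q$-formula together with the symmetry $P(w)=Q(w^{-1})$.
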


\begin{proof}
We describe $P(w_0w_{i,j})$ and  $Q(w_0w_{i,j})$ as follows, where  $i, j \in \{1, 2,\ldots, n\}$, $i\neq j$. 

If $i<j$, then $n+1-i\geq 2$, $j\geq 2$, and we have the tableaux
\[
P( w_0 w_{ij} ) = \ytableaushort{1{\scriptstyle n+1-i},2,\vdots,n}
\quad \quad
Q( w_0 w_{ij} ) = \ytableaushort{1 j,2,\vdots,n}.
\]
To be precise,  the first columns in the above two tableaux have $n-1$ boxes, filled with  $\{1, 2, \ldots, n\}\setminus \{n+1-i\}$ and $\{1, 2, \ldots, n\}\setminus \{j\}$ respectively.

Similarly, if $i>j$, then $n+2-i\geq 2$, $j+1\geq 2$, and we have the tableaux
\[
P( w_0 w_{i,j}  ) = \ytableaushort{1{\scriptstyle n+2-i},2,\vdots,n}
\quad \quad
Q( w_0 w_{i,j} ) = \ytableaushort{1 {j+1},2,\vdots,n}.
\]
The lemma follows easily, by noting that $w_0 w_{1,i}=y_i$, $i=1,\ldots, n$.
\end{proof}

It is slightly more convenient to reformulate Lemma \ref{lem:min} (a) in terms of the elements $v_{i,j}$ given by \eqref{v_ij} as follows. 

\begin{cor} \label{cor:min}
For $i, j\in \{2,\ldots, n\}$, it holds that $(w_0 v_{i,j})_{\min} = y_j$.
\end{cor}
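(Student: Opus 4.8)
The plan is to deduce this directly from Lemma \ref{lem:min}(a) together with the definition \eqref{v_ij} of $v_{i,j}$, by a short case analysis according to whether $i\leq j$ or $i>j$.

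In the case $i\leq j$ I would use that $v_{i,j}=w_{i-1,j}$ by \eqref{v_ij}. Since $i,j\in\{2,\ldots,n\}$ and $i\leq j$, the indices $i-1$ and $j$ lie in $\{1,\ldots,n\}$ and satisfy $i-1<j$ (in particular $i-1\neq j$), so Lemma \ref{lem:min}(a) applies and yields $(w_0 v_{i,j})_{\min}=(w_0 w_{i-1,j})_{\min}=y_j$. (This branch also covers the boundary case $i=j$, where it reduces to $(w_0 w_{j-1,j})_{\min}=y_j$.)

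In the case $i>j$ I would instead use that $v_{i,j}=w_{i,j-1}$ by \eqref{v_ij}. Now $i>j$ and $j\geq 2$ force $i>j-1\geq 1$, so the pair $(i,j-1)$ consists of distinct elements of $\{1,\ldots,n\}$ with $i>j-1$; Lemma \ref{lem:min}(a) then gives $(w_0 v_{i,j})_{\min}=(w_0 w_{i,j-1})_{\min}=y_{(j-1)+1}=y_j$. Combining the two cases proves the claim.

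There is essentially no obstacle here: the corollary is exactly the bookkeeping observation that the index shift built into the definition of $v_{i,j}$ is designed to cancel the $j\mapsto j+1$ discrepancy between the $i<j$ and $i>j$ branches of Lemma \ref{lem:min}(a), so that both branches uniformly produce $y_j$. The only points requiring a moment's care are verifying that the relevant indices stay within $\{1,\ldots,n\}$ and remain distinct, which is guaranteed precisely by the hypothesis $i,j\in\{2,\ldots,n\}$.
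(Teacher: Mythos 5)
Your proof is correct and takes the same approach the paper intends: the paper presents Corollary \ref{cor:min} as a direct reformulation of Lemma \ref{lem:min}(a) via the definition \eqref{v_ij} and gives no further argument, and your two-case verification is exactly the bookkeeping that reformulation consists of.
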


\subsection{Goldie rank polynomials}    
For $w\in \frak S_n$, let $\wh{C}_w$ be the set of integral weights lying in the upper closure of the chamber containing $-w\rho$ (see e.g. \cite[Page 1744]{B11} for the precise definition of upper closure). By \cite[Section 5.12]{J80b}, there exists a unique polynomial function  $p_w\in \BC[\Fh^*]$, called the Goldie rank polynomial, such that
\[
{\rm rk}\left(\CU(\Fg)/I(\nu)\right) = p_w(\delta)
\]
for all $\nu \in \widehat{C}_w$, where $\delta$ is the anti-dominant conjugate of $\nu$. Moreover, $p_w$ only depends on the left cell of $w$. It follows that  \eqref{bd} is given by
\begin{equation} \label{bd2}
\begin{aligned}
c_w(\lambda) = p_{(w_0w^{-1})_{\min}}(-\bd{a}) \cdot p_{(w_0w)_{\min}}(-\bd{b}),
\end{aligned}
\end{equation}
where $\lambda=(\bd{a},\bd{b})\in\Lambda$ is regular and dominant.

We recall the formula of $p_w$ (in type A) from \cite{J80a} and \cite[Theorem 1.6]{B11}. 
Assume that $y\in \CC^{\min}$. Then $p_y\in\mathbb C[\Fh^*]$ is given by
\[
p_y(\nu) = \sum_{x\in \frak S^{\overline{L}}}(L_y: M_x) \cdot h_L (x\nu),
\]
where $h_L$ is given as in \eqref{poly},  $L_x:=L(-x\rho)$ denotes the unique irreducible quotient of the Verma module $M_x:=M(-x\rho)$ for each $x\in \frak S_n$, and the multiplicity $(L_y: M_x)\in \mathbb{Z}$ is defined by the equality of virtual representations
\[
[L_y] = \sum_{x\in \frak S_n} (L_y: M_x)\cdot [M_x].
\]
By the Kazhdan-Lusztig Conjecture proven in \cite{BB81, BK81}, 
\[
(L_y: M_x) = (-1)^{\ell(x)+\ell(y)} P_{x, y}(1)\qquad  x, y\in \frak S_n,
\]
where $P_{x, y}(q)\in \BZ[q]$ denotes the Kazhdan-Lusztig polynomial (see \cite{KL79}), and $\ell(x)$ and $\ell(y)$ respectively denote the lengths of $x$ and $y$.  Since $P_{x,y}=0$ if $x\not\leq y$,  we obtain that for $y\in \CC^{\min}$,
\[
p_y(\nu) = \sum_{x\in \frak S^{\overline{L}},\, x\leq y}(-1)^{\ell(x)+\ell(y)}P_{x,y}(1) \cdot h_L(x\nu),
\]
here, as mentioned in Section \ref{sec3}, $\leq$ denotes the Bruhat order on $\mathfrak S_n$.
Recall that $\frak S^{\overline L}=\{y_1>\cdots > y_n\}$ is totally ordered. Thus by Lemma \ref{lem:min} (2), for $i=2,\ldots, n$,
\begin{equation} \label{gr-kl}
p_{y_i}(\nu) = \sum^n_{j=i} (-1)^{\ell(y_j)+\ell(y_i)}P_{y_j,y_i}(1) \cdot h_L(y_j\nu).
\end{equation}

\subsection{Parabolic Kazhdan-Lusztig polynomials}
In view of Lemma \ref{lem:BD}, \eqref{bd2} and \eqref{gr-kl}, we need to evaluate the inverse of the upper triangular matrix 
\[
((-1)^{\ell(y_j)+\ell(y_i)}P_{y_j,y_i}(1))_{i,j=2,\ldots, n},
\]
which is a lower-right submatrix of the upper triangular  matrix 
\begin{equation} \label{mat-C}
((-1)^{\ell(y_j)+\ell(y_i)}P_{y_j,y_i}(1))_{i,j=1,\ldots, n}.
\end{equation}
This can be solved by using the theory of parabolic Kazhdan-Lusztig polynomials introduced in \cite{D87}, the inversion formula in \cite{D90}, and the explicit computation for the maximal parabolic case in
\cite{B02}, which will be recalled below.

Following \cite{D87} and the exposition in \cite{B02}, for each symbol $\star\in\{-1, q\}$, two  families of polynomials (with different notation in \cite{D87})
\[
\{R^{L,\star}_{u,v}(q)\}_{u,v\in \frak S^{\underline{L}}},\quad  \{P^{L,\star}_{u,v}(q)\}_{u,v\in \frak S^{\underline{L}}}\subset \BZ[q]
\]
are introduced, called the  parabolic $R$-polynomials and  parabolic Kazhdan-Lusztig polynomials of $\frak S^{\underline{L}}$ of type $\star$ respectively. We refer to \cite[Section 2]{D87} for the precise definitions. The results we need in this article are the following properties of $P^{L,\star}_{u,v}(q)$:
\begin{itemize}

\item $P^{L, \star}_{u,v}(q)=0$ if $u\not\leq v$;

\item $P^{L,\star}_{u,u}(q)=1$;

\item Relation with the ordinary Kazhdan-Lusztig polynomial:
\[
P^{L,-1}_{u,v}(q) = P_{w_Lu, w_{L}v}(q)\quad(\text{see \eqref{wl} for the definition of $w_L$});
\]

\item Inversion formula (\cite[Theorem 4.6]{D90}):
\[
\delta_{u, v} = \sum_{u\leq x\leq v} (-1)^{\ell(u)+\ell(x)}P_{u, x}^{L,-1}(q) \cdot P^{L,q}_{v^*, x^*}(q),
\]
where $x^* := w_{L}x w_0$ for $x\in \frak S^{\underline L}$, and $\delta_{u,v}$ is the Kronecker symbol.
\end{itemize}
Note that $x\mapsto x^*$ is the unique order-reversing involution of $\frak S^{\underline{L}}$. To be explicit, we have that 
\[
x_i^* = x_{n+1-i},\quad i=1,\ldots, n,
\]
where $x_i$ was defined as in \eqref{mini}.
In view of the above discussions, we conclude that the inverse of the matrix \eqref{mat-C} is given by
\begin{equation} \label{inverse}
 ((-1)^{\ell(y_j)+\ell(y_i)}P_{y_j,y_i}(1))_{i,j=1,\ldots, n}^{-1} = (P^{L, q}_{x_j, x_i}(1))_{i, j=1,\ldots, n}.
\end{equation}

\subsection{Explicit formula} \label{ssec:EF} We will give the explicit formula for $P^{L, q}_{u, v}(q)$, $u, v\in \frak{S}^{\underline{L}}$, following \cite{B02}. 
In order to be self-contained, we recall the general result from {\it loc. cit}. To this end, we need to introduce some terminologies. We identity a partition $[p_1\geq \ldots \geq p_r]$ with its Young diagram 
\[
\{(i, j)\in \BN^*\times \BN^* \mid 1\leq i\leq r, \, 1\leq j\leq p_i\}.
\]
The above elements $(i,j)$ are called the boxes of the Young diagram. 
For a box $(i,j)$, define its  level to be $i+j$. 
In this subsection, we draw the Young diagram of a partition  
rotated counterclockwise by $\frac{3}{4}\pi$ radians with respect to the usual convention.  For example, the following Figure $1$ is the Young diagram of the partition $[3,2]$, where each box is filled by its level:
\begin{figure*}[htb]
{\begin{center}
\rotatebox{145}{\begin{tikzpicture}[scale=\domscale+0.25,baseline=-45pt]
	\hobox{0}{0}{\rotatebox{215}{2}}
	\hobox{1}{0}{\rotatebox{215}{3}}
	\hobox{2}{0}{\rotatebox{215}{4}}
	\hobox{0}{1}{\rotatebox{215}{3}}
    \hobox{1}{1}{\rotatebox{215}{4}}
		\end{tikzpicture}}
	\caption{}
\end{center}}
\end{figure*}

Let $\eta$ be a skew-partition, which means that $\eta$ is a subset of $\BN^*\times \BN^*$ of   the form $\lambda\setminus \mu$ for some Young diagrams  $\mu\subseteq \lambda$. The  outer border  strip $\theta$ of $\eta$ is the subdiagram of $\eta$ such that a box of $\eta$ is in $\theta$ if and only if there is no box of $\eta$ directly above it (for example the outer border  strip of the diagram $[3,2]$ is illustrated in Figure $2$). 
If $\theta$ is connected (by which we mean ``rookwise connected" so that for example the skew-partition $[2,1]\setminus [1]$ is not connected), then it is called a  Dyck cbs (cbs abbreviates ``connected border strip") if no box of $
\theta$ has level strictly less than that of the leftmost or rightmost of its boxes. In particular, the leftmost and the rightmost boxes in a Dyck cbs have the same level. For example, the connected border strip in Figure $2$ is not a Dyck cbs.
\begin{figure*}[htb]
{\begin{center}
 \rotatebox{145}{\begin{tikzpicture}[scale=\domscale+0.25,baseline=-45pt]
	\hobox{1}{0}{\rotatebox{215}{}}
	\hobox{2}{0}{\rotatebox{215}{}}
	\hobox{0}{1}{\rotatebox{215}{}}
    \hobox{1}{1}{\rotatebox{215}{}}
		\end{tikzpicture}}
	\caption{}
\end{center}}
\end{figure*}

Now we define the notion of  Dyck skew-partition, which is crucial to the formula of $P^{L, q}_{u, v}(q)$.

\begin{defn} \label{defn:dyck}
Define a skew-partition $\eta$ to be  Dyck inductively as follows:

\begin{enumerate}
    \item It is Dyck if and only if  all its  connected components are  Dyck;
    \item If $\eta$ is connected, then $\eta$ is Dyck if and only if 
    \begin{enumerate}
        \item its outer border strip $\theta$ is a Dyck cbs, and 
        \item $\eta^{(1)}:=\eta\setminus \theta$ is Dyck;
    \end{enumerate}
    \item The empty partition $\varnothing$ is Dyck.
\end{enumerate}
\end{defn}

As a simple example, the skew-partition $[3,1]\setminus [2]$ is Dyck, but $[3,1]\setminus [1]$ is not Dyck. To see  larger examples, we have that $[4^3, 3]$ is not Dyck, while $[4^4]\setminus[1]$ and $[4^3, 3]\setminus[1]$ are Dyck (\cite{B02}).

Let $\eta$ be a  skew-partition (not necessarily Dyck).  Define its depth ${\rm dp}(\eta)$ inductively by ${\rm dp}(\varnothing)=0$ and 
\[
{\rm dp}(\eta) = c(\theta) + {\rm dp}(\eta^{(1)}),
\]
where $\theta$ is the outer border strip of $\eta$ and $\eta^{(1)}=\eta\setminus \theta$, and $c(\theta)$ denotes the number of connected components of $\theta$. 

Denote by 
$\CP_L(n)$ the set of partitions contained in $[1^{n-1}]$ (including the empty partition). There is a bijection (\cf \cite[Proposition 2.8]{B02}) 
\[
\Psi: \frak S^{\underline{L}}\to \CP_L(n), \quad  x\mapsto \Psi(x):= [1^{i_x}],
\]
where 
\[
i_x:=\textrm{the cardinality of the set } \{r\in \{1,2,\dots, n-1\}\mid x^{-1}(r)>r\}.
\]
Moreover, $u\leq v$ in $\frak S^{\underline{L}}$ if and only if $\Psi(u) \subset \Psi(v)$, in which case 
\begin{equation}  \label{eta}
\eta_{u,v}:=\Psi(v)\setminus \Psi(u)
\end{equation}
is a skew-partition. By \cite[Theorem 5.1]{B02}, it holds that
\[
P^{L, q}_{u,v}(q) = \begin{cases} q^{\frac{1}{2}(|\eta_{u,v}| - {\rm dp}(\eta_{u,v}))}, & \textrm{if }\eta_{u,v}\textrm{ is Dyck,}\\
0, & \textrm{otherwise.}
\end{cases}
\]
In particular, taking $q=1$ gives that
\[
P^{L, q}_{u,v}(1) = \begin{cases} 1, & \textrm{if }\eta_{u,v}\textrm{ is Dyck,}\\
0, & \textrm{otherwise.}
\end{cases}
\]

To be explicit, we have that $\Psi(x_i) = [1^{n-i}]$, $i=1,\ldots, n$. The Dyck condition in this case is very simple:  for $1\leq i\leq j\leq n$,
\[
\eta_{x_j, x_i} = \Psi(x_i)\setminus \Psi(x_j) = [1^{n-i}]\setminus [1^{n-j}],
\]
which is Dyck if and only if $j-i\leq 1$. It follows that 
the matrix  $(P^{L,q}_{x_j, x_i}(1))_{i,j=1,\ldots, n}$ is a Jordan block 
\begin{equation}\label{JD}
   (P^{L,q}_{x_j, x_i}(1))_{i,j=1,\ldots, n} = \begin{pmatrix}
   1 & 1 & 0 & \cdots & 0 & 0  \\
   0 & 1 & 1 & \cdots & 0  & 0 \\
   & & \cdots & \cdots & & \\
   0 & 0 & 0 & \cdots & 1 & 1  \\
   0 & 0 & 0 & \cdots & 0 & 1
   \end{pmatrix}_{n\times n}.
\end{equation}

\subsection{Proof of Proposition \ref{prop:BD}}
By \eqref{gr-kl}, \eqref{inverse} and \eqref{JD}, we find that
for $2\leq i\leq n$, 
\begin{equation} \label{hL1}
h_L( - y_i \bd{a}) =  \begin{cases} p_{y_i}(-\bd{a}) + p_{y_{i+1}}(-\bd{a}), & \textrm{if }2\leq i<n, \\
p_{y_n}(-\bd{a}), & \textrm{if }i=n.
\end{cases}
\end{equation} 
It remains to deal with $h_L(-y_1\bd{a})$.  From the Weyl dimension formula 
\[
h_L(  - y_i\bd{a} ) = \frac{1}{1!2!\cdots (n-2)!} \prod_{1\leq s< t\leq n, \, s, t\neq i} (a_s -a_t),
\]
it is easy to deduce the identity 
\begin{equation} \label{alt}
\sum^n_{i=1} (-1)^i  h_L(-y_i \bd{a})=0.
\end{equation}
This together with \eqref{hL1} gives that
\begin{equation} \label{hL2}
h_L(-y_1 \bd{a}) =  p_{y_2}(-\bd{a}).
\end{equation}
We have similar results for $h_L(-y_j \bd{b})$.

Proposition \ref{prop:BD} is a combination of Lemma \ref{lem:BD}, Corollary \ref{cor:min}, \eqref{bd2}, \eqref{hL1} and \eqref{hL2}.

\section{Proof of the main results} \label{sec5}
This section is devoted to the proofs of Theorems \ref{thm:sing} and \ref{thm:coh}. The notations are as in the Introduction and Section \ref{sec3}.

\subsection{Proof of Theorem \ref{thm:sing}} 
We need to prove that
\[
\overline{\Psi}_{w_{i,j}}(\lambda^0_{i,j}) =  \left[\overline X(\gamma^0_{i,j})\right].
\]
Recall from \cite{X18} the following subsets of $\Irr(\BC^\times)$:
\[
{\rm C}^+(n)=\{\chi_{\frac{n}{2}+r, \frac{n}{2}+s} \mid r, s\in \BN\},\quad 
{\rm C}^-(n)=\{\chi^{-1}  \mid \chi\in {\rm C}^+(n)\}.
\]
Then $\chi_{\frac{n}{2}-i_0, \frac{n}{2}-j_0}\not\in {\rm C}^+(n)\cup {\rm C}^-(n)$. Hence by 
\cite[Theorem 1.1 (a)]{X18},
\[
\Ind^G_{P_{n-1,1}}(1\otimes \chi_{\frac{n}{2}-i_0, \frac{n}{2}-j_0})
\]
is irreducible. Parabolic induction in stages shows that it is a quotient of the principal series representation $X(\gamma^0_{i,j})$. By considering its $K$-types, we see that 
\[
\Ind^G_{P_{n-1,1}}(1\otimes \chi_{\frac{n}{2}-i_0, \frac{n}{2}-j_0})\cong \overline X(\gamma^0_{i,j}).
\]

We can find a regular dominant element $\lambda\in \Lambda$ such that $\overline X(\gamma^0_{i,j})$ is a translation of $\overline{X}(\gamma_{\lambda,w_{i,j}})$, where $\gamma_{\lambda,w_{i,j}}$ is associated with $\lambda$ via \eqref{gammaw}. See \cite[Section 3]{V79} for details. By \cite[Theorem 6.18]{SV80}, we have that
$\overline \Psi_{w_{i,j}}(\lambda^0_{i,j})\neq 0$, in which case it equals  $\left[\overline X(\gamma^0_{i,j})\right]$. This can be made more explicit as follows. Denote by $e_1,\ldots, e_n$ the standard basis of $\frak h^*$, and $\alpha_k := e_k-e_{k+1}$, $k=1,\ldots, n-1$, which consists of the simple roots of $\Phi^+(\frak g, \frak h)$. By the definition of 
$(i_0, j_0)$ we can check that
\[
w_0 w_{i,j}^{-1}(\alpha_{i_0})>0,\quad   w_{i,j}w_0(\alpha_{j_0})>0.
\]
By \cite[(3.9)]{BV85}, $\alpha_{i_0}$ (resp. $\alpha_{j_0}$) does not belong to the left (resp. right) Borho-Jantzen-Duflo $\tau$-invariant of $\overline X(\gamma_{\lambda,w_{i,j}})$. Thus 
$\overline \Psi_{w_{i,j}}(\lambda^0_{i,j})\neq 0$. This finishes the proof of Theorem \ref{thm:sing}.

\subsection{Proof of Theorem \ref{thm:coh}} To prove Theorem 
\ref{thm:coh}, we first determine the Bernstein degree of 
$\overline X(\gamma_{\lambda, w_{i,j}})$ using Proposition \ref{prop:BD}.

\begin{prop} \label{cor:d}
Let $\lambda\in\Lambda$ be regular and dominant. For $i, j \in \{1, 2,\ldots, n\}$, $i\neq j$, the constant $d_{w_{i,j}}=1$ in \eqref{const}, that is,
\[
c(\overline X(\gamma_{\lambda,w_{i,j}}))=c_{w_{i,j}}(\lambda).
\]
\end{prop}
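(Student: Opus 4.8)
The plan is to compare the Bernstein degree of the induced representation $I_{\gamma_{\lambda,w_{i,j}}}(a_i,b_j)$, which equals $\dim F_{i,j}$ by Lemma~\ref{lem:BD}, against the expansion of $\Psi_{i,j}$ in the basis \eqref{basis1} of ${\rm Coh}_\Lambda^{\min}(\CK(G))$. First I observe that $\Psi_{i,j}\in{\rm Coh}_\Lambda^{\min}(\CK(G))$: by Proposition~\ref{Ind-BD} the representation $I_{\gamma_{\lambda,w_{i,j}}}(a_i,b_j)$ has Gelfand--Kirillov dimension $2n-2$, so all of its composition factors have Gelfand--Kirillov dimension $\le 2n-2$. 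Hence there are $m^{i,j}_{k,l},m^{i,j}_0\in\BN$, independent of the regular dominant $\lambda\in\Lambda$ (they are read off from the coherent family $\Psi_{i,j}$, so are the constituent multiplicities of $I_{\gamma_{\lambda,w_{i,j}}}(a_i,b_j)$ for every such $\lambda$), with
\[
\Psi_{i,j}=\sum_{k,l\in\{2,\ldots,n\}}m^{i,j}_{k,l}\,\overline\Psi_{v_{k,l}}+m^{i,j}_0\,\overline\Psi_1 .
\]
Since Bernstein degree is additive over the composition factors of top Gelfand--Kirillov dimension, and $\overline X(\gamma_{\lambda,1})$ has Gelfand--Kirillov dimension $0$, evaluating at a regular dominant $\lambda$ and using \eqref{const} gives
\[
\dim F_{i,j}=c\bigl(I_{\gamma_{\lambda,w_{i,j}}}(a_i,b_j)\bigr)=\sum_{k,l}m^{i,j}_{k,l}\,d_{v_{k,l}}\,c_{v_{k,l}}(\lambda).
\]

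On the other hand, Proposition~\ref{prop:BD} gives $\dim F_{i,j}=\sum_{(k,l)\in[i]\times[j]}c_{v_{k,l}}(\lambda)$. By \eqref{bd2}, Corollary~\ref{cor:min}, and the identity $v_{k,l}^{-1}=v_{l,k}$, one has $c_{v_{k,l}}(\lambda)=p_{y_k}(-\bd{a})\,p_{y_l}(-\bd{b})$; since the Goldie rank polynomials $p_{y_2},\ldots,p_{y_n}$ are linearly independent (this follows from \eqref{gr-kl} and \eqref{alt}, or from the theory of \cite{J80a}), the functions $\{c_{v_{k,l}}\}_{k,l\in\{2,\ldots,n\}}$ of $\lambda$ are linearly independent. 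Matching coefficients in the two displayed expressions for $\dim F_{i,j}$ yields
\[
m^{i,j}_{k,l}\,d_{v_{k,l}}=\begin{cases}1, & (k,l)\in[i]\times[j],\\ 0,&\textrm{otherwise.}\end{cases}
\]
Writing $(k_0,l_0)$ for the unique pair with $v_{k_0,l_0}=w_{i,j}$, a direct check from \eqref{cyc}, \eqref{v_ij} and the definition of $[\,\cdot\,]$ shows $(k_0,l_0)\in[i]\times[j]$, whence $m^{i,j}_{k_0,l_0}\,d_{w_{i,j}}=1$; in particular $m^{i,j}_{k_0,l_0}\ge 1$.

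It remains to prove that the multiplicity $m^{i,j}_{k_0,l_0}$ equals $1$, which then forces $d_{w_{i,j}}=1$ and finishes the proof. For this I evaluate the expansion of $\Psi_{i,j}$ at the dominant (singular) parameter $\lambda^0_{i,j}\in\Lambda$ of Theorem~\ref{thm:sing}. Unwinding the definitions of $\Theta$ and $\Psi_{i,j}$, together with the computation in the proof of Theorem~\ref{thm:sing} that $\Ind^G_{P_{n-1,1}}(1\otimes\chi_{\frac n2-i_0,\frac n2-j_0})\cong\overline X(\gamma^0_{i,j})$ is irreducible, shows that $\Psi_{i,j}(\lambda^0_{i,j})=[\overline X(\gamma^0_{i,j})]$ is a single irreducible. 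Theorem~\ref{thm:sing} gives $\overline\Psi_{v_{k_0,l_0}}(\lambda^0_{i,j})=\overline\Psi_{w_{i,j}}(\lambda^0_{i,j})=[\overline X(\gamma^0_{i,j})]$, and by \cite[Corollary 7.3.23]{V81} each of $\overline\Psi_{v_{k,l}}(\lambda^0_{i,j})$ and $\overline\Psi_1(\lambda^0_{i,j})$ is either zero or irreducible. Evaluating the expansion therefore gives $[\overline X(\gamma^0_{i,j})]=m^{i,j}_{k_0,l_0}\,[\overline X(\gamma^0_{i,j})]+\Xi$ with $\Xi$ a non-negative integral combination of irreducible representations; comparing the coefficient of $[\overline X(\gamma^0_{i,j})]$ forces $m^{i,j}_{k_0,l_0}\le 1$, so $m^{i,j}_{k_0,l_0}=1$ and $d_{w_{i,j}}=1$.

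The main difficulty is this last step: comparing Bernstein degrees alone controls only the product $m^{i,j}_{k_0,l_0}\,d_{w_{i,j}}$, so one genuinely needs the multiplicity bound $m^{i,j}_{k_0,l_0}\le 1$, for which the input is the irreducibility statement of Theorem~\ref{thm:sing} at the singular parameter $\lambda^0_{i,j}$ together with the rigidity of coherent families. (An alternative is to evaluate the Goldie rank polynomial $c_{w_{i,j}}$ directly at $\lambda^0_{i,j}$: using \eqref{hL1}--\eqref{hL2} and the vanishing of $h_L(-y_m\bd{a})$ at $\bd{a}=\bd{a}^0$ for $m\notin\{i_0,i_0+1\}$ one finds $c_{w_{i,j}}(\lambda^0_{i,j})=1$, which matches $c(\overline X(\gamma^0_{i,j}))=\dim F_{\lambda'}=1$ coming from Proposition~\ref{Ind-BD}; this route, however, requires knowing that \eqref{const} persists, with the same constant $d_{w_{i,j}}$, at the singular parameter.)
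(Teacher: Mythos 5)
Your argument is correct, and its overall architecture (expand $\Psi_{i,j}$ in the basis \eqref{basis1}, take Bernstein degrees, match against Proposition \ref{prop:BD} via linear independence of the polynomials $c_{v_{k,l}}$, then pin down the multiplicity $m^{i,j}_{k_0,l_0}=1$) is close in spirit to what the paper does. The genuine divergence is how the multiplicity-one statement is established. You obtain $m^{i,j}_{k_0,l_0}\le 1$ by evaluating the coherent-family expansion at the singular parameter $\lambda^0_{i,j}$, invoking Theorem \ref{thm:sing} (which in turn rests on Xue's irreducibility criterion for degenerate principal series) together with the rigidity principle \cite[Corollary 7.3.23]{V81}. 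The paper instead uses a much lighter observation at the regular point itself: by induction in stages, $I_{\gamma_{\lambda,w_{i,j}}}(a_i,b_j)$ is a subquotient of the full principal series $X(\gamma_{\lambda,w_{i,j}})$; the Langlands subquotient $\overline X(\gamma_{\lambda,w_{i,j}})$ occurs in $X(\gamma_{\lambda,w_{i,j}})$ exactly once (being characterized by the minimal $K$-type), and a minimal $K$-type count shows it occurs at all in $I_{\gamma_{\lambda,w_{i,j}}}(a_i,b_j)$ --- hence exactly once. Both routes are valid, but the paper's is more elementary and self-contained (it needs no information at singular infinitesimal character, so in particular does not rely on Theorem \ref{thm:sing}), while yours puts more load on the external irreducibility result. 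It is worth noting that your logic is not circular --- the paper proves Theorem \ref{thm:sing} before Proposition \ref{cor:d}, and the former does not use the latter --- and your caveat about the alternative route (whether \eqref{const} persists, with the same constant, on the wall) is exactly the right thing to worry about.
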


\begin{proof}

In view of  \cite[Section 5.12]{J80b}, we can define a linear map 
\[
 c: {\rm Coh}_\Lambda^{\min}(\CK(G))\rightarrow \BC[\frak h^*\times \frak h^*]
\]
such that for all $1\leq i,j\leq n$ and  all regular dominant elements $\lambda\in \Lambda$,
\[
c(\overline{\Psi}_{w_{i,j}})(\lambda)=\left\{
\begin{array}{ll}
  \textrm{the Bernstein degree of } \overline{\Psi}_{w_{i,j}}(\lambda),&\textrm{if $i\neq j$};\\
  0, &\textrm{if $i=j$}.
  \end{array}
  \right.
\]


By parabolic induction in stages,  $I_{\gamma_{\lambda,w_{i,j}}}(a_i, b_j)$ is a subquotient 
of $X(\gamma_{\lambda,w_{i,j}})$. The consideration of minimal $K$-types shows that 
$\overline X(\gamma_{\lambda,w_{i,j}})$ is a constituent of $I_{\gamma_{\lambda,w_{i,j}}}(a_i, b_j)$, hence it occurs with multiplicity one. By Lemma \ref{lem:BD} and Proposition \ref{prop:BD}, we have that
\[
c(I_{\gamma_{\lambda,w_{i,j}}}(a_i, b_j)) = \dim F_{i,j} = \sum_{(k,l)\in [i]\times [j]}c_{v_{k,l}}(\lambda).
\]
It is also straightforward to verify that $w_{i,j}=v_{k,l}$
for some $(k,l)\in [i]\times [j]$. 
The multiplicity one of the occurence of $\overline X(\gamma_{\lambda, w_{i,j}})$ in $I_{\gamma_{\lambda,w_{i,j}}}(a_i, b_j)$ implies that 
\[
c(\overline X(\gamma_{\lambda,w_{i,j}}))= c(\overline{\Psi}_{w_{i,j}})(\lambda) = c_{w_{i,j}}(\lambda).
\]
Here we use the fact that $c(\overline \Psi_{v_{i',j'}})\in \BC[\Fh^*\times \Fh^*]$,  $i', j' \in \{2,\ldots, n\}$, $i'\neq j'$, are linearly independent, which follows from the fact that \eqref{basis1} gives a basis of ${\rm Coh}^{\min}_\Lambda(\CK(G))$.
\end{proof}

Part (a) of Theorem \ref{thm:coh} is now a consequence of the following result. 

\begin{thm} \label{thm:composition}
Suppose that  $\lambda=(a_1,\ldots, a_n, b_1,\ldots, b_n)\in \Lambda$ is regular and dominant. With the notation as in \eqref{gammaw}, for all  $i,j\in\{1, 2, \ldots, n\}$, the equality 
\[
\left[ I_{\gamma_{\lambda,w_{i,j}}}(a_i, b_j) \right] = \sum_{(k,l)\in [i]\times[j]} \left[\overline X(\gamma_{\lambda,v_{k,l}})\right] + \delta_{i,j}\cdot \left[F_\lambda\right]
\]
holds in $\CK(G)$, where $F_\lambda$ is the irreducible finite-dimensional representation of $\mathrm{GL}_{n}(\BC)$ with infinitesimal character $\lambda$.
\end{thm}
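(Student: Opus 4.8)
The plan is to determine $[I]$, where $I:=I_{\gamma_{\lambda,w_{i,j}}}(a_i,b_j)=\Ind_{P_{n-1,1}}^G(F_{i,j}\otimes\chi_{a_i,b_j})$, in two stages: first the constituents of maximal Gelfand--Kirillov dimension, which are detected by the Bernstein degree, and then the finite-dimensional constituent, which is not. Since $\Dim I=2n-2$ by Proposition~\ref{Ind-BD}, every irreducible constituent of $I$ has Gelfand--Kirillov dimension $\le 2n-2$; as $\lambda$ is regular, Proposition~\ref{thmin} forces each such constituent to have Gelfand--Kirillov dimension $0$ or $2n-2$, and Corollary~\ref{cohm} together with the identification $\{w_{i,j}:i\ne j\}=\{v_{k,l}:k,l\in\{2,\dots,n\}\}$ shows that the constituents of $I$ lie in $\{\overline X(\gamma_{\lambda,v_{k,l}}):k,l\in\{2,\dots,n\}\}\cup\{F_\lambda\}$. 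Hence in $\CK(G)$ we may write $[I]=\sum_{k,l\in\{2,\dots,n\}}m_{k,l}[\overline X(\gamma_{\lambda,v_{k,l}})]+c_0[F_\lambda]$ for some $m_{k,l},c_0\in\BN$.

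To pin down the $m_{k,l}$ I would argue exactly as in the proof of Proposition~\ref{cor:d}. Applying the Bernstein degree to this identity, and using that the constituent $F_\lambda$ -- of strictly smaller Gelfand--Kirillov dimension -- contributes $0$, one gets $\sum_{k,l}m_{k,l}\,c_{v_{k,l}}(\lambda)=c(I)=\dim F_{i,j}$ on the one hand (Proposition~\ref{cor:d} and Lemma~\ref{lem:BD}), and $\dim F_{i,j}=\sum_{(k,l)\in[i]\times[j]}c_{v_{k,l}}(\lambda)$ on the other (Proposition~\ref{prop:BD}). By \eqref{bd2}, Corollary~\ref{cor:min} and the identity $v_{k,l}^{-1}=v_{l,k}$ one has $c_{v_{k,l}}(\lambda)=p_{y_k}(-\bd a)\,p_{y_l}(-\bd b)$; by \eqref{gr-kl} each $p_{y_i}$ ($2\le i\le n$) is a unitriangular combination of $h_L(y_i\cdot),\dots,h_L(y_n\cdot)$, and these Weyl dimension polynomials are linearly independent (e.g.\ by the explicit product formula recalled just before \eqref{alt}), so $p_{y_2},\dots,p_{y_n}$ are linearly independent and hence so is the product family $\{p_{y_k}(-\bd a)p_{y_l}(-\bd b)\}_{k,l\in\{2,\dots,n\}}$ in $\BC[\Fh^*\times\Fh^*]$. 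Comparing coefficients then gives $m_{k,l}=1$ for $(k,l)\in[i]\times[j]$ and $m_{k,l}=0$ otherwise.

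It remains to prove $c_0=\delta_{i,j}$; this is the delicate point, since the finite-dimensional constituent is invisible to the Bernstein degree. As $F_{i,j}$ is the Langlands quotient of the principal series $\sigma$ of $\GL_{n-1}(\BC)$ attached to the totally ordered parameter $\gamma'$, induction in stages realizes $I$ as a quotient of $Y:=\Ind_{P_{n-1,1}}^G(\sigma\otimes\chi_{a_i,b_j})$, a principal series of $G$ whose inducing characters constitute the multiset $\gamma_{\lambda,w_{i,j}}$, so that $[Y]=[X(\gamma_{\lambda,w_{i,j}})]$ in $\CK(G)$. By Proposition~\ref{thmin}(a) and the well-known composition series of the standard modules of $\GL_n(\BC)$, the finite-dimensional representation $F_\lambda$ occurs in $X(\gamma)$ if and only if $\gamma$ is totally ordered, and then with multiplicity one; since $\gamma_{\lambda,w_{i,j}}$ is totally ordered exactly when $w_{i,j}=1$, i.e.\ exactly when $i=j$, we get that $F_\lambda$ occurs in $Y$ with multiplicity $\delta_{i,j}$, whence $c_0\le\delta_{i,j}$. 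For $i=j$ one has in addition $c_0=\delta_{i,j}-m'$, where $m'$ is the multiplicity of $F_\lambda$ in $\ker(Y\to I)=\Ind_{P_{n-1,1}}^G(\ker(\sigma\to F_{i,j})\otimes\chi_{a_i,b_j})$; this kernel is zero when $n=2$, while for $n\ge 3$ every irreducible constituent of $\ker(\sigma\to F_{i,j})$ is infinite-dimensional (Proposition~\ref{thmin} applied to $\GL_{n-1}(\BC)$), so by \cite[Corollary 5.0.10]{B98} every irreducible constituent of this kernel has Gelfand--Kirillov dimension at least $\dim_\BR(\Fg/\Lie P_{n-1,1})>0$ and in particular is not $F_\lambda$, i.e.\ $m'=0$. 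Therefore $c_0=\delta_{i,j}$, which completes the proof.

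The main obstacle is this last step: the finite-dimensional constituent contributes nothing to the Bernstein degree (the workhorse for everything else), so it must be located by an independent argument -- here by realizing $I$ as a quotient of a principal series and invoking the standard facts that $F_\lambda$ occurs in a $\GL_n(\BC)$ standard module only for the totally ordered parameter, and that parabolic induction from an infinite-dimensional module has only infinite-dimensional constituents. Everything else is routine linear algebra in $\CK(G)$ once Proposition~\ref{cor:d} and the linear independence of the Goldie rank polynomials $p_{y_2},\dots,p_{y_n}$ are available. (Alternatively this last step can be carried out inside the parabolic Kazhdan--Lusztig formalism of \S\ref{ssec:EF}, reading $[I]$ off from Brenti's monomial parabolic Kazhdan--Lusztig polynomials, with the row and column indexed by $y_1\notin\CC^{\min}$ handled as in \eqref{alt}--\eqref{hL2}.)
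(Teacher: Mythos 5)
Your argument for the Gelfand--Kirillov dimension $2n-2$ part of the statement is essentially the paper's: both treat $I_{\gamma_{\lambda,w_{i,j}}}(a_i,b_j)$ in $\CK(G)$ as a sum of GK-dimension-$(2n-2)$ constituents plus possibly $F_\lambda$, read off the multiplicities from the Bernstein degree identity $c(I)=\dim F_{i,j}=\sum_{(k,l)\in[i]\times[j]}c_{v_{k,l}}(\lambda)$ (Propositions~\ref{Ind-BD}, \ref{prop:BD}, \ref{cor:d}), and conclude by linear independence of the Goldie rank polynomials $p_{y_2},\dots,p_{y_n}$.

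Where you diverge from the paper is the treatment of the finite-dimensional constituent. The paper shows directly, via minimal $K$-types, Frobenius reciprocity and the rearrangement inequality, that $F_\lambda$ does not occur in the principal series $X(\gamma_{\lambda,w})$ for any $w\neq 1$ (which disposes of $i\neq j$), and for $i=j$ invokes the minimal-$K$-type occurrence argument from the proof of Proposition~\ref{cor:d}. You instead realize $I$ as a quotient of the principal series $Y$ with $[Y]=[X(\gamma_{\lambda,w_{i,j}})]$, use the (asserted, and indeed standard) fact that $F_\lambda$ occurs in $X(\gamma_{\lambda,w})$ only for $w=1$, and for $i=j$ analyze $\ker(Y\to I)$. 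This is a reasonable and perhaps cleaner way to organize the $i=j$ case than the paper's brief ``similar to Proposition~\ref{cor:d}.''

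However, there is a gap in your $i=j$ step: you claim that because every constituent of $\ker(\sigma\to F_{i,j})$ is infinite-dimensional, \cite[Corollary 5.0.10]{B98} forces every irreducible constituent of $\Ind^G_{P_{n-1,1}}(\ker(\sigma\to F_{i,j})\otimes\chi)$ to have Gelfand--Kirillov dimension at least $\dim_\BR(\Fg/\Lie P_{n-1,1})$. But that corollary, as used in the paper (Proposition~\ref{Ind-BD} and the proof of Proposition~\ref{thmin}), computes the Gelfand--Kirillov dimension and Bernstein degree of the induced module as a whole; it gives an upper bound, not a lower bound, on the Gelfand--Kirillov dimension of individual irreducible constituents, so it does not rule out a $0$-dimensional constituent. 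The conclusion $m'=0$ is still correct, but needs a different justification; for instance, if $\tau$ is an infinite-dimensional constituent of $\sigma$, then $\Ind^G_{P_{n-1,1}}(\tau\otimes\chi)$ is a subquotient of $X(\gamma_{\lambda,w})$ for a nontrivial $w$ (embed $\tau$ into a principal series of $\GL_{n-1}(\BC)$ with non-totally-ordered parameter and use induction in stages), and then the same ``well-known'' fact you already invoked shows $F_\lambda$ does not occur. Alternatively, one can argue as the paper does for $i=j$: the minimal $K$-type of $F_\lambda$ occurs in $I$ by Frobenius reciprocity, hence $F_\lambda$ occurs in $I$ with multiplicity one. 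Either repair makes your proof complete.
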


\begin{proof} By Proposition \ref{prop:BD} and Proposition \ref{cor:d} (and its proof), $\overline X(\gamma_{\lambda,v_{k,l}})$, $(k,l)\in [i]\times[j]$, are all the constituents of $I_{\gamma_{\lambda,w_{i,j}}}(a_i, b_j)$ with Gelfand-Kirillov dimension $2n-2$, and each of them occurs with multiplicity one.

It remains to show that $F_\lambda$ occurs in $I_{\gamma_{\lambda,w_{i,j}}}(a_i, b_j)$ 
if and only if $i=j$, in which case it occurs with multiplicity one.
The case that $i=j$ can be proved similarly as in the  proof of Proposition \ref{cor:d}, which will be omitted.

Assume that $i\neq j$. Then we need to show that $F_\lambda$ does not occur in
$I_{\gamma_{\lambda,w_{i,j}}}(a_i, b_j)$. Since $I_{\gamma_{\lambda,w_{i,j}}}$ is a subquotient
of $X(\gamma_{\lambda,w_{i,j}})$, this follows from the general statement that $F_\lambda$ does not occur in  $X(\gamma_{\lambda,w})$ for any $w\neq 1$. The last statement should be known, but we provide a proof below for completeness.

Recall that $B$ is the upper triangular Borel subgroup of $G$, and $X(\gamma_{\lambda,w}) = \Ind^G_B(\chi_{\lambda,w})$, where
\[
\chi_{\lambda,w} := \chi_{a_1, b_{w^{-1}(1)}}\otimes\cdots \otimes \chi_{a_n, b_{w^{-1}(n)}}.
\]
It suffices to show that the minimal $K$-type of $F_\lambda$, which has extremal weight $\bd{a}-\bd{b}$ (see 
\eqref{ex-wt}), does not occur in $X(\gamma_{\lambda,w})$ for any $w\neq 1$. 
If we let $T\cong \RU(1)^n$ be the diagonal torus in $K=\RU(n)$, then 
\[
X(\gamma_{\lambda,w})\vert_K \cong \Ind^K_T (\chi_{\lambda,w}\vert_T).
\]
By Frobenius reciprocity, we need to show that $\bd{a}-w\bd{b}$, $w\neq 1$, is not a weight of the minimal
$K$-type of $F_\lambda$.

We recall an elementary fact (see \cite[Proposition 0.4.3 b)]{V81}): if $\xi, \zeta\in \BZ^n $ are  weights of an irreducible representation of $K$ and $\xi$ is extremal, then $\langle \xi, \xi\rangle \geq \langle \zeta, \zeta\rangle$, where $\langle \ ,\rangle$ is the standard inner product. 
Thus it is enough to prove that for $w\neq 1$,
\[
\langle \bd{a}-\bd{b}, \bd{a}- \bd{b}\rangle < \langle \bd{a}-w\bd{b}, \bd{a}-w\bd{b}\rangle.
\]
Without loss of generality, we assume that $\bd{a}, \bd{b}\in \BZ^n$. The above inequality is reduced to
\[
\langle \bd{a}, \bd{b}\rangle > \langle \bd{a}, w\bd{b}\rangle,
\]
which follows directly from the fact that $\lambda$ is regular and dominant and the well known rearrangement inequality.

This finishes the proof that $F_\lambda$ does not occur in $X(\gamma_{\lambda,w})$ for  $w\neq 1$,
and thereby finishes the proof of the theorem.
\end{proof}

It remains to prove parts  (b), (c) and (d) of Theorem \ref{thm:coh}, which are essentially consequences of part (a). Part (b) can be deduced from  (a) and the identity (see \eqref{hL1}) that for $i\in \{2,\ldots, n\}$,   
\[
p_{y_i}(- \bd{a}) = \sum_{i\leq k\leq n}(-1)^{k-i} h_L(-y_k \bd{a}).
\]
Part (c) follows from  (a) and \eqref{alt}. Part (d) follows from   (a) and  (b). The proof is finished.

\section*{Acknowledgement}

Z. Bai was supported in part by the National Natural Science Foundation of 
China (No. 12171344) and the National Key $\textrm{R}\,\&\,\textrm{D}$ Program of China (No. 2018YFA0701700 and No. 2018YFA0701701). Y. Chen was supported in part by the Natural Science Foundation of Jiangsu Province (No. BK20221057).
D. Liu was supported by National Key R \& D Program of China (No. 2022YFA1005300)  and the National Natural Science Foundation of China (No. 12171421). B. Sun was supported by  National Key R \& D Program of China (No. 2022YFA1005300 and 2020YFA0712600) and New Cornerstone Investigator Program.


\begin{thebibliography}{99}

\bibitem[A12]{A12}
S. Alesker, {\it
The $\alpha$-cosine transform and intertwining integrals on real Grassmannians},  Geometric aspects of functional analysis, 1--21,
Lecture Notes in Math., 2050, Springer, Heidelberg, 2012.

\bibitem[B98]{B98}
D. Barbasch, {\it Orbital integrals of nilpotent orbits}, The mathematical legacy of Harish-Chandra (Baltimore, MD, 1998), 97--110, Proc. Sympos. Pure Math., 68, Amer. Math. Soc., Providence, RI, 2000.

\bibitem[BV82]{BV82}
D. Barbasch, D. Vogan, {\it Primitive ideals and orbital integrals in complex classical groups,} Math. Ann. 259 (1982), no. 2, 153--199.

\bibitem[BV85]{BV85}
D. Barbasch, D. Vogan, {\it Unipotent representations of complex semisimple groups,} Ann. of Math. (2) 121 (1985), no. 1, 41--110.

\bibitem[BB81]{BB81}
A. Beilinson, J. Bernstein, {\it 
Localisation de $\frak{g}$-modules,}
C. R. Acad. Sci. Paris S\'er. I Math. 292 (1981), no. 1, 15--18.

\bibitem[BB05]{BB05} A. Bj\"orner, F. Brenti, {\it Combinatorics of Coxeter groups.} Graduate Texts in Mathematics, 231. Springer, New York, 2005. xiv+363 pp.

\bibitem[B02]{B02}
F. Brenti, {\it Kazhdan-Lusztig and R-polynomials, Young's lattice, and Dyck partitions,} Pacific J. Math. 207 (2002), no. 2, 257--286.

\bibitem[B11]{B11}
J. Brundan, {\it Mœglin's theorem and Goldie rank polynomials in Cartan type A,}
Compos. Math. 147 (2011), no. 6, 1741--1771.

\bibitem[BK81]{BK81}
J.-L. Brylinski, M. Kashiwara, {\it Kazhdan-Lusztig conjecture and holonomic systems,} Invent. Math. 64 (1981), no. 3, 387--410.


 
\bibitem[D87]{D87}
V. Deodhar, {\it On some geometric aspects of Bruhat orderings. II. The parabolic analogue of Kazhdan-Lusztig polynomials,} J. Algebra 111 (1987), no. 2, 483--506.

\bibitem[D90]{D90}
J. Douglass,  {\it An inversion formula for relative Kazhdan-Lusztig polynomials,}
Comm. Algebra 18 (1990), no. 2, 371--387.




\bibitem[G17]{G17}
D. Gourevitch, 
{\it Composition series for degenerate principal series of $\GL(n)$},
C. R. Math. Acad. Sci. Soc. R. Can. 39 (2017), no. 1, 1--12.

\bibitem[HL99]{HL99}
R. Howe, S. T. Lee, {\it Degenerate principal series representations of $\GL_n(\BC)$ and $\GL_n(\BR)$}, J. Funct. Anal. 166 (1999), no. 2, 244--309. 

\bibitem[H08]{H08}
J. Humphreys, {\it Representations of semisimple Lie algebras in the BGG category $\mathcal{O}$}, Graduate Studies in Math., vol. 94, 2008.

\bibitem[J77]{J77}
A. Joseph, {\it On the annihilators of the simple subquotients of the principal series,} Ann. Sci. \'Ecole Norm. Sup. (4) 10 (1977), no. 4, 419--439.

\bibitem[J80a]{J80a}
A. Joseph, {\it Kostant's problem, Goldie rank and the Gel'fand-Kirillov conjecture,} Invent. Math. 56 (1980), no. 3, 191--213.

\bibitem[J80b]{J80b}
A. Joseph, {\it Goldie rank in the enveloping algebra of a semisimple Lie algebra. I,} J. Algebra 65 (1980), no. 2, 269--283.

\bibitem[KL79]{KL79}
D. Kazhdan, G. Lusztig, {\it Representations of Coxeter groups and Hecke algebras,} Invent. Math. 53 (1979), no. 2, 165--184. 


\bibitem[S01]{S01}B. E. Sagan, {\it The Symmetric Group: Representations, Combinatorial Algorithms, and Symmetric Functions,} Graduate Texts in Mathematics, Springer, New York, 2001. 

\bibitem[SV80]{SV80}
B. Speh, D. Vogan, {\it Reducibility of generalized principal series representations}, Acta Math. 145 (1980), no. 3--4, 227--299.

\bibitem[V78]{V78}
D. Vogan, {\it Gelfand-Kirillov dimension for Harish-Chandra modules,}
Invent. Math. 48 (1978), no. 1, 75--98.

\bibitem[V79]{V79}
D. Vogan, {\it Irreducible characters of semisimple Lie groups. I}, Duke Math. J. 46 (1979), no. 1, 61--108. 

\bibitem[V79b]{V79b}
D. Vogan, {\it The algebraic structure of the representation of semisimple Lie groups I}, Ann. of Math. 109 (1979), 1--60. 

\bibitem[V81]{V81}
D. Vogan, {\it Representations of real reductive Lie groups}, Progress in Math., vol. 15, Birkh\"auser, Boston, Mass., 1981.

\bibitem[V82]{V82}
D. Vogan, {\it Irreducible characters of semisimple Lie groups. IV. Character-multiplicity duality}, Duke Math. J. 49 (1982), no. 4, 943--1073.

\bibitem[X18]{X18}
H. Xue, {\it Homogeneous distributions on finite dimensional vector spaces}, J. Lie Theory 28 (2018), no. 1, 33--41.

\bibitem[Z74]{Z74}
D. P. Zhelobenko, {\it Harmonic analysis on semisimple complex Lie groups} (Russian), Izdat. Nauka, Moscow, 1974. 240 pp.
\end{thebibliography}
\end{document}